\documentclass[11pt]{amsart}
\newcommand\version{July 28, 2026}

\usepackage[T1]{fontenc}
\usepackage{lmodern}
\usepackage{microtype}
\usepackage{mathtools}
\usepackage{amssymb}
\usepackage{enumitem}
\usepackage[colorlinks=true,linkcolor=blue,citecolor=blue,urlcolor=blue]{hyperref}
\usepackage[nameinlink,noabbrev]{cleveref}
\usepackage[margin=1.12in]{geometry}

\newtheorem{theorem}{Theorem}[section]
\newtheorem{proposition}[theorem]{Proposition}
\newtheorem{lemma}[theorem]{Lemma}
\newtheorem{corollary}[theorem]{Corollary}

\theoremstyle{definition}

\theoremstyle{remark}
\newtheorem{remark}[theorem]{Remark}

\newcommand{\T}{\mathbb T}
\newcommand{\Sone}{\mathbb S^1}
\newcommand{\Z}{\mathbb Z}
\newcommand{\R}{\mathbb R}
\newcommand{\N}{\mathbb N}
\newcommand{\C}{\mathbb C}

\newcommand{\wh}{\widehat}

\DeclareMathOperator{\degmap}{deg}

\title[Winding number at the critical H\"older endpoint]
{The Winding Number at the Critical H\"older Exponent $1/3$:\\
Failure of Universal Fourier Summation}

\hypersetup{
  pdftitle={No Universal Fourier Summation Formula for the Degree of Circle Maps},
  pdfauthor={Rupert L. Frank and Paata Ivanisvili},
  pdfsubject={Fourier summation processes and the winding number},
  pdfkeywords={winding number, topological degree, Fourier coefficients, summation process, fractional Sobolev space, Hölder space}
}

\author{Rupert L. Frank}
\address[Rupert L. Frank]{Mathematisches Institut, Ludwig-Maximilians Universit\"at M\"unchen, Theresienstr.~39, 80333 M\"unchen, Germany; Munich Center for Quantum Science and Technology, Schellingstr.~4, 80799 M\"unchen, Germany; and Mathematics 253-37, Caltech, Pasadena, CA 91125, USA}
\email{r.frank@lmu.de}

\author{Paata Ivanisvili}
\address[Paata Ivanisvili]{Department of Mathematics, University of California, Irvine, 510C Rowland Hall, Irvine, CA 92697-3875, USA}
\email{pivanisv@uci.edu}

\date{\version}
\subjclass[2020]{Primary 42A24; Secondary 42A16, 46E35, 55M25}
\keywords{Winding number, topological degree, Fourier coefficients, summation process, fractional Sobolev space, H\"older space, autocorrelation}

\begin{document}

\begin{abstract}
The degree (or winding number) of a sufficiently regular map $f:\T\to\Sone$ is given in terms of its Fourier coefficients by
\[
 \degmap f=\sum_{n\in\Z}n\,|\wh f(n)|^2 \,.
\]
At lower regularity the series may diverge, but, as shown by Kahane, when $f$ is $\alpha$-H\"older continuous with $\alpha>1/3$, then the degree can be recovered by a universal linear summation process. We show that no summation process satisfying Brezis's natural axioms can recover the degree universally for $\alpha=1/3$, thereby resolving an open problem by Brezis.
\end{abstract}

\maketitle


\section{Introduction}

We are interested in maps $f:\T\to\Sone$ and their degree. Here, for the sake of clarity, we distinguish between the domain $\T=\R/(2\pi\Z)$ and the target $\Sone=\{ z\in\C:\ |z|=1\}$. 

The degree can be defined for maps $f\in VMO(\T;\Sone)$, as shown by Brezis and Nirenberg \cite{BrezisNirenberg1995}, but for most of the present paper the degree for continuous maps $f$ suffices. In this case, we can write $f=e^{i\varphi}$ with a continuous function $\varphi:\R\to\R$ satisfying
$$
\varphi(x+2\pi)=\varphi(x)+2\pi d
\qquad\text{for all}\ x\in\R
$$
with some $d\in\Z$, and one has $d=\deg f$. For more background we refer to \cite{Brezis2006,BrezisMironescu2021}.

The starting point of our paper is a formula for the degree in terms of the Fourier coefficients of $f$, namely,
\begin{equation}\label{eq:smooth-degree}
 \degmap f =\sum_{n\in\Z}n\,|\wh f(n)|^2 
 \qquad\text{for all}\ f\in W^{1/2,2}(\T;\Sone) \,,
\end{equation}
where the Fourier coefficients are defined by
$$
\wh f(n)=\int_\T f(x)e^{-inx}\,\frac{dx}{2\pi},\qquad n\in\Z \,.
$$
For smooth maps $f$, equation \eqref{eq:smooth-degree} follows easily from $\degmap f = (2\pi i)^{-1} \int_0^{2\pi} \overline{f(x)}f'(x)\,dx$ and Parseval's formula. It is not difficult to extend the formula, with absolute convergence, to maps $f$ in the Sobolev space $W^{1/2,2}(\T;\Sone)$; see \cite{Brezis1997}.

Identity \eqref{eq:smooth-degree} becomes subtle for regularity below $W^{1/2,2}$. Natural scales of spaces where this question has been investigated are the H\"older spaces $C^{0,\alpha}$ and the Sobolev spaces $W^{1/p,p}$, $1<p<\infty$. For the latter spaces the degree is well defined since $W^{1/p,p}\subset VMO$ as recalled, for example, in \cite{Brezis2006}.

It turns out that the critical regularity is at $s=1/3$ derivatives. Indeed, Kahane \cite{Kahane2005} and Brezis \cite{Brezis2006} showed that
\begin{equation}\label{eq:sinc-formula}
 \degmap f
 =\lim_{\varepsilon\downarrow0}
 \sum_{n\ne0}n\,|\wh f(n)|^2\frac{\sin(n\varepsilon)}{n\varepsilon}
 \qquad \text{for all}\ f\in W^{1/3,3}(\T;\Sone) \,.
\end{equation}
Conversely, Kahane proved \cite[Proposition~P4]{Kahane2005} that
\eqref{eq:sinc-formula} may fail for certain
$f\in C^{0,1/3}(\T;\Sone)$. This failure even holds in the strong sense
that for any $\lambda\in\R$ there is an
$f\in C^{0,1/3}(\T;\Sone)$ with $\degmap f=0$ such that the regularized
expression on the right side of \eqref{eq:sinc-formula} converges to
$\lambda$.

These results do not rule out the possibility of a regularization procedure, different from the one in \eqref{eq:sinc-formula}, for which the analogue of this formula remains valid below regularity $1/3$.

Our main result shows that this is not the case: there is no summation process satisfying the natural assumptions proposed by Brezis for which the formula \eqref{eq:smooth-degree} can be regularized at H\"older regularity $1/3$ or below Sobolev regularity $1/3$. This gives a negative answer to one of Brezis's favorite open problems \cite[Open Problem~5.6]{Brezis2023}.

\begin{theorem}[Main theorem]\label{thm:main}
Let $(\sigma_{n,\varepsilon})_{n\in\Z,\,0<\varepsilon<1}\subset\C$ be a family such that
\begin{align}
 &\sup_{n\in\Z}|n|\,|\sigma_{n,\varepsilon}|<\infty
 &&\text{for every fixed }\varepsilon\in(0,1),\label{eq:process-bounded}\\
 &\sigma_{n,\varepsilon}\longrightarrow1
 &&\text{as }\varepsilon\downarrow0,\text{ for every fixed }n\in\Z.\label{eq:process-pointwise}
\end{align}
Then there is a map
\[
 f\in C^{0,1/3}(\T;\Sone)
\]
for which
\begin{equation}
    \label{eq:main}
     \sum_{n\in\Z}n\,|\wh f(n)|^2\sigma_{n,\varepsilon}
 \not\longrightarrow\degmap f
 \qquad\text{as }\varepsilon\downarrow0.
\end{equation}
\end{theorem}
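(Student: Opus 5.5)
The plan is to argue by contradiction via a gliding-hump construction. Suppose the sum in \eqref{eq:main} converges to $\degmap f$ for every $f\in C^{0,1/3}(\T;\Sone)$. I will build a single map $f=e^{i\varphi}$ of degree $0$ with $\varphi=\sum_k\varphi_k$, where the blocks $\varphi_k$ live at rapidly increasing frequency scales. I will then choose $\varepsilon_k\downarrow0$ so that $\sum_n n|\wh f(n)|^2\sigma_{n,\varepsilon_k}$ stays bounded away from $0$.

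The mechanism is Kahane's. Write $\Phi(M)=\sum_{|n|\le M}n|\wh f(n)|^2$ for the truncated flux. For $\varphi=N^{-1/3}\psi(Nx)$ with a non-symmetric profile $\psi$ (sawtooth-like, with $\int(\psi(y+t)-\psi(y))^3\,dy\neq0$), expanding $\operatorname{Im}\overline{f(x)}f(x+h)=\sin(\varphi(x+h)-\varphi(x))$ to third order shows the following. The linear term integrates to zero for degree $0$, and the cubic term has size $|h|\cdot N^{0}$ at $|h|\sim N^{-1}$. Hence $\Phi(M)\approx\lambda\neq0$ for $M$ in a range $[RN,\,R'N]$, uniformly in $N$, while $\|\varphi\|_{C^{0,1/3}}\lesssim1$.

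Since the total flux must be $\degmap f=0$, this excess $\lambda$ must be compensated at higher frequencies. The first step is therefore to design, for each $k$, a two-scale block $\varphi_k$. It produces flux $\approx\lambda$ in a band $|n|\lesssim R'N_k$. The compensating flux $-\lambda$ sits at a much higher scale $K_k\gg N_k$, in the form of a small-amplitude oscillation whose $L^2$ mass is only $\sum_{|n|\sim K_k}|\wh f(n)|^2\approx\lambda/K_k$. This comes from a second Kahane-type piece with the opposite cubic asymmetry. A key point is that $K_k$ may be chosen freely after everything else at stage $k$. I would check that the sum of such blocks with lacunary scales stays in $C^{0,1/3}$ with uniform seminorm, which should follow since each piece has $C^{0,1/3}$ norm $O(1)$ at separated scales.

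The induction runs as follows. Having fixed blocks $1,\dots,k-1$, choose $N_k$ large. Then use \eqref{eq:process-pointwise} on the finite set $|n|\le R'N_k$ to pick $\varepsilon_k$ so small that $|\sigma_{n,\varepsilon_k}-1|$ is tiny there. Here I also need the $\ell^1$-control $\sum_{|n|\le R'N_k}|n||\wh f(n)|^2\lesssim$ const for the partial map, which holds because it is smooth. Next use \eqref{eq:process-bounded} with the constant $C_k=\sup_n|n||\sigma_{n,\varepsilon_k}|$ to pick $K_k$ and all later scales so large that
\[
\Bigl|\sum_{|n|>R'N_k}n|\wh f(n)|^2\sigma_{n,\varepsilon_k}\Bigr|\le C_k\sum_{|n|>R'N_k}|\wh f(n)|^2
\]
is tiny. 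This requires the $L^2$ mass of $f$ beyond frequency $R'N_k$ to be small, which holds since the remaining oscillation there has amplitude $\ll1$. Then $\sum_n n|\wh f(n)|^2\sigma_{n,\varepsilon_k}\approx\Phi(R'N_k)\approx\lambda$, while $\degmap f=0$, which is the desired contradiction.

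The main obstacle is the nonlinearity. Since $f=\prod_k e^{i\varphi_k}$, the Fourier coefficients of different blocks interact, so I must show that $\Phi(R'N_k)$ for the full $f$ is still $\approx\lambda$. My first attempt is to write $f=g_kh_k$, with $g_k$ containing blocks $\le k$ and the low-scale part of block $k$, and $h_k=e^{i\sum_{j\text{ later}}\varphi_j}$. One then shows $\|h_k-1\|_{L^2}$ and the leakage of $g_k$ beyond $R'N_k$ are both negligible, using rapid decay of $\wh g_k$ relative to the lacunary gap. A further point needs care: the flux of $g_k$ below $R'N_k$ must equal (earlier compensated blocks, $\approx0$) plus $\lambda$. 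Verifying that the earlier blocks' positive and negative fluxes both lie below $R'N_k$ and cancel to within a small error is where the third-order expansion has to be made quantitative.
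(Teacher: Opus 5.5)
There is a genuine gap, and it sits exactly where you defer the compensation, and then the tail, to choices made after $\varepsilon_k$.

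First, the compensating flux cannot live in a block chosen after $\varepsilon_k$. Put $\Phi_f(L)=\sum_{|n|\le L}n|\wh f(n)|^2$. For unimodular $f,g$ one has $|\Phi_f(L)-\Phi_g(L)|\le 2L\|f-g\|_2$. Suppose $h_k=e^{i\chi}$, where $\chi$ has spectrum in $|n|\ge K_k$ and $[\chi]_{C^{0,1/3}}\lesssim1$. Testing the H\"older condition with increment $\pi/(2D)$ on each dyadic block $D\le|n|<2D$ gives $\|\chi\|_2\lesssim K_k^{-1/3}$. Hence $\|g_kh_k-g_k\|_2\le\|\chi\|_2\lesssim K_k^{-1/3}$, and
\[
 \Phi_f(R'N_k)=\Phi_{g_k}(R'N_k)+O\bigl(R'N_kK_k^{-1/3}\bigr)
 =-\sum_{|n|>R'N_k}n|\wh g_k(n)|^2+O\bigl(R'N_kK_k^{-1/3}\bigr),
\]
where the last step uses that $g_k$ is smooth of degree $0$. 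So the far block with ``opposite cubic asymmetry'' has no influence on the low-band flux. Your two requirements on $g_k$ (negligible leakage beyond $R'N_k$, and flux $\approx\lambda$ below $R'N_k$) therefore contradict each other. The same happens for a single block $N^{-1/3}\psi(Nx)$ with a fixed smooth or Lipschitz profile: its truncated flux returns to $0$ inside the block's own spectral band. Under the $C^{0,1/3}$ constraint, a plateau $\Phi\approx\lambda$ needs something like Kahane's lacunary cascade, i.e.\ substantial Fourier content above the checkpoint that is already fixed when $\varepsilon_k$ is chosen.

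Second, this makes your tail step fail in an essential way. Take $f\in C^{0,1/3}(\T;\Sone)$ of degree $0$ and $L$ large, and write $S_Lf=\rho e^{i\theta}$ for the Fourier partial sum. Then $\rho\ge1/2$, since $\|f-S_Lf\|_\infty\lesssim L^{-1/3}\log L$, and $\Phi_f(L)=\int_\T(\rho^2-1)\theta'\,\frac{dx}{2\pi}$. Using $|\rho^2-1|\le3|f-S_Lf|$ and $\|\theta'\|_2\lesssim L^{2/3}$ (H\"older control of $\sum_{|n|\le L}n^2|\wh f(n)|^2$), you get
\[
 |\Phi_f(L)|\lesssim L^{2/3}\Bigl(\sum_{|n|>L}|\wh f(n)|^2\Bigr)^{1/2},
\]
with constants depending on $[f]_{C^{0,1/3}}$. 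Your construction uses only \eqref{eq:process-bounded} and \eqref{eq:process-pointwise}, so it would have to work for the admissible family with $\sigma_{n,\varepsilon}=1$ for $|n|\le1/\varepsilon$, $\sigma_{n,\varepsilon}=1/\varepsilon$ for $1/\varepsilon<|n|\le2/\varepsilon$, and $\sigma_{n,\varepsilon}=0$ otherwise. For this family, $\sigma_{n,\varepsilon_k}\approx1$ on $|n|\le L=R'N_k$ forces $C_k\ge L^2$. Your requirement $C_k\sum_{|n|>L}|\wh f(n)|^2\ll1$ then yields $|\Phi_f(L)|\ll L^{-1/3}$, not $\approx\lambda$.

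So a gliding hump that discards high frequencies using \eqref{eq:process-bounded} alone cannot work. The recovery hypothesis has to be used on a whole family of maps to control the weights on the band where the compensation lives. That is what the paper does. Baire category gives a ball of phases $\varphi_0+\eta$, $\|\eta\|_{C^{0,1/3}}<r$, on which all $Q_{q_j}$ are uniformly bounded. Translation averaging converts the $q_j$ into real odd multipliers $s_j$ with $s_j(1)\to1$, uniformly bounded and tending to $0$ on small degree-zero maps. The exact identity $\sum_\ell\Lambda_\ell\int_\T Q_s(h_{\ell,\tau})\,\frac{d\tau}{2\pi}=\kappa s(1)$, built from Kahane's function and small-phase autocorrelation synthesis, then gives the contradiction by dominated convergence, without ever localizing flux in frequency.
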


\begin{remark}
    (a) Assumptions \eqref{eq:process-bounded} and \eqref{eq:process-pointwise} on the summation process are precisely the assumptions in \cite[Eqs.~(5.11) and (5.12)]{Brezis2023}. Assumption \eqref{eq:process-bounded} makes $\sum_{n\in\Z}n\,|\wh f(n)|^2\sigma_{n,\varepsilon}$ absolutely convergent for $\varepsilon\in(0,1)$. Assumption \eqref{eq:process-pointwise} is in fact forced, for every $n\ne0$, by universal recovery of the degree: applying the process to $f(x)=e^{inx}$ gives $n\sigma_{n,\varepsilon}\to n$. The value of $\sigma_{0,\varepsilon}$ is irrelevant.\\
    (b) Theorem \ref{thm:main} gives a negative answer to both the H\"older and the Sobolev parts of \cite[Open Problem~5.6]{Brezis2023}. This follows from the inclusions $C^{0,1/3}(\T;\Sone)\subset C^{0,\alpha}(\T;\Sone)$ for every $0<\alpha\leq 1/3$ and $C^{0,1/3}(\T;\Sone)\subset W^{1/p,p}(\T;\Sone)$ for every $p>3$. The latter inclusion is a consequence of
    $$
    [f]_{W^{1/p,p}}^p = \iint_{\T\times\T} \frac{|f(x)-f(y)|^p}{d_\T(x,y)^2}\,dx\,dy \leq C_p \sup_{0<|h|\leq\pi} \frac{\|f(\cdot+h)-f\|_{L^\infty}^p}{|h|^{p/3}} = C_p [f]_{C^{0,1/3}}^p \,,
    $$
    where
    $$
    C_p := 2\pi\int_{-\pi}^\pi |h|^{p/3-2}\,dh<\infty \qquad\text{when }p>3 \,.
    $$
    Here and in what follows use the norm and seminorm
    \[
    \|u\|_{C^{0,\alpha}}
    :=\|u\|_{L^\infty}
    +[u]_{C^{0,\alpha}},
    \qquad
    [u]_{C^{0,\alpha}}
    :=\sup_{x\ne y}
    \frac{|u(x)-u(y)|}{d_\T(x,y)^\alpha}.
    \]
    (c) A weaker form of the main theorem, with $f\in C^{0,1/3}$ replaced by $f\in C^{0,\alpha}$ for any given $\alpha<1/3$ was recently obtained in \cite{Cieszynski2026}. This suffices to provide a negative answer to the part of \cite[Open Problem~5.6]{Brezis2023} that concerns the Sobolev scale. To provide a negative answer to the part concerning the H\"older scale the full strength of our theorem is required. We compare our approach with that in \cite{Cieszynski2026} at the end of the introduction.\\
    (d) Our counterexample is weaker than Kahane's \cite[Proposition~P4]{Kahane2005} in the special case $\sigma_{n,\varepsilon} = (n\varepsilon)^{-1} \sin(n\varepsilon)$, since we do not know whether $f$ can be chosen to have degree zero and since we cannot prescribe the limit on the left side of \eqref{eq:main} (or even prove that this limit exists).\\
    (e) We emphasize that $f$ depends on the family $(\sigma_{n,\varepsilon})$. In \cite{Cieszynski2026} it is observed that, under the literal definition of a summation process, every $f\in C(\T;\Sone)$ (indeed, every $f\in VMO(\T;\Sone)$) admits an $f$-dependent summation process recovering $\degmap f$. Thus \cite[Open Problem~5.7]{Brezis2023} has an affirmative answer and \cite[Open Problem~5.8]{Brezis2023} a negative answer. Since the construction in \cite{Cieszynski2026} may depend explicitly on $\degmap f$, it is natural to seek a meaningful variant imposing additional structural restrictions on the summation processes.
\end{remark}


\subsection*{Historical remarks}

In \cite{BrezisNirenberg1995,Brezis1997}, the question was raised whether a standard
summation process can be used to regularize the formula
\eqref{eq:smooth-degree} for $f\in C(\T;\Sone)$. Korevaar
\cite{Korevaar1999} proved that, for continuous circle maps, the symmetric
sums $\sum_{|n|\leq N}n|\wh f(n)|^2$ may diverge and may also converge to a value different from $\degmap f$;
in fact, his convergent examples allow any value different from the
degree. He also gave examples for which the corresponding series is not
Abel--Poisson summable. Thus neither symmetric summation nor
Abel--Poisson summation recovers the degree for all continuous circle
maps. Korevaar's work was followed by work of Kahane
\cite{Kahane2005} and Brezis \cite{Brezis2006}, which we have already mentioned.

A closely related question is whether the modulus data $\{|\wh f(n)|\}_{n\in\Z}$ determine the degree. That this is the case for $f\in W^{1/3,3}(\T;\Sone)$ is a consequence of \eqref{eq:sinc-formula}. Conversely, Bourgain and Kozma \cite{BourgainKozma2007} have constructed $f,g\in C(\T;\Sone)$ with
\[
 |\wh f(n)|=|\wh g(n)|\quad\text{for all }n\in\Z,
 \qquad
 \degmap f\ne\degmap g \,.
\]
Whether such a pair can be constructed with both maps in $W^{1/p,p}$ for
some $p>3$, or in $C^{0,\alpha}$ for some $0<\alpha\leq1/3$, remains an
open question; see \cite[Open Problem~5.5]{Brezis2023}.

We emphasize that the Bourgain--Kozma construction shows that every summation process fails on some continuous circle map; see \cite[Corollary~5.1]{Brezis2023}. The thrust of our theorem is to show that the counterexample can be chosen in the critical H\"older space.


\subsection*{Structure of our proof}

The proof of Theorem~\ref{thm:main} has two main ingredients.  

$\bullet$ First, we prove a critical barycentric identity: every sufficiently small ball in the real $C^{0,1/3}$ space of phase functions supports an absolutely summable signed family of degree-zero maps whose averaged autocorrelation asymmetry is exactly a nonzero multiple of $\sin t$.  The latter identity is obtained by combining an autocorrelation synthesis theorem for small smooth phases with Kahane's lacunary solution of a cubic functional equation.

$\bullet$ Second, Baire category and translation averaging reduce a hypothetical universal process to a sequence of bounded odd multipliers which converges to the identity at every fixed frequency and is uniformly bounded on a full ball of small degree-zero perturbations.  

\medskip

We present the first ingredient in Sections \ref{sec:synthesis} and \ref{sec:kahane}. Theorem \ref{thm:synthesis} is the main result of Section \ref{sec:synthesis} and concerns the synthesis by autocorrelations of small phases. It is combined in Section \ref{sec:kahane} with Kahane's counterexample to prove the critical barycentric identity in Proposition \ref{prop:critical-barycenter}. Its Corollary \ref{cor:barycenter-multiplier} is the culmination of these two sections.

The second ingredient is presented in Section \ref{sec:main-proof}, where we complete the proof of Theorem \ref{thm:main}.

\medskip

While completing this manuscript, we learned of the recent work of
Cieszyński \cite{Cieszynski2026}, who proved the corresponding
nonexistence result for $f\in C^{0,\alpha}(\T;\Sone)$ with $\alpha<1/3$. This also gives the negative answer for $W^{1/p,p}$ for every $p>3$, while leaving the endpoint $C^{0,1/3}$ open. The argument in \cite{Cieszynski2026} is based on the Baire category theorem, as is ours, but in contrast to ours it uses multiplier-adapted degree-zero
perturbations built from quotients of Blaschke factors and relies crucially on a gain derived from the strict inequality $\alpha<1/3$. At the endpoint, we replace this unavailable gain by a multiplier-independent exact critical
barycentric identity (Proposition \ref{prop:critical-barycenter} and Corollary \ref{cor:barycenter-multiplier}), obtained from small-phase autocorrelation synthesis (Theorem \ref{thm:synthesis}) and Kahane's lacunary cubic identity (Lemma \ref{lem:kahane}).



\section{Synthesis by autocorrelations of small phases}\label{sec:synthesis}

The purpose of this section is to prove the following synthesis theorem for odd periodic functions with zero derivative at the origin in terms of autocorrelation functions. The imaginary part of the autocorrelation function of $f\in L^2(\T)$ is denoted by
\begin{equation}
    \label{eq:imag-autocorrelation}
    B_f(t) :=\operatorname{Im}\int_\T f(x+t)\overline{f(x)}\,\frac{dx}{2\pi} \qquad\text{for}\ t\in\T \,.
\end{equation}

\begin{theorem}[Small-phase autocorrelation synthesis]\label{thm:synthesis}
Let $P$ be a smooth real odd $2\pi$-periodic function with $P'(0)=0$.  Given $\delta>0$, there are real coefficients $(\Lambda_\ell)_{\ell\geq1}$ and smooth real periodic functions $(\theta_\ell)_{\ell\geq1}$ such that
\begin{align}
 &\sum_{\ell\geq1}|\Lambda_\ell|<\infty,\label{eq:synthesis-l1}\\
 &\|\theta_\ell\|_{C^1}<\delta
 \quad\text{for every }\ell,\label{eq:synthesis-small}\\
 &P(t)=\sum_{\ell\geq1}\Lambda_\ell
 B_{e^{i\theta_\ell}}(t)
 \quad\text{for every }t\in\T.\label{eq:synthesis-identity}
\end{align}
The series in \eqref{eq:synthesis-identity} converges absolutely and uniformly.
\end{theorem}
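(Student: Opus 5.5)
The approach is perturbative. First I record the structure of $B_f$: by Parseval, $B_f(t)=\sum_{k\in\Z}|\wh f(k)|^2\sin(kt)$. This function is odd, and for a degree-zero map $f=e^{i\theta}$ one has $B_f'(0)=\sum_k k|\wh f(k)|^2=\degmap f=0$. So every $B_{e^{i\theta}}$ lies in the target class $\mathcal P$ of odd functions with vanishing derivative at $0$. Writing $P(t)=\sum_{n\geq1}p_n\sin(nt)$, the $p_n$ decay rapidly and satisfy $\sum n p_n=0$. Hence $P=\sum_{n\geq2}p_n\bigl(\sin(nt)-n\sin t\bigr)$, with coefficients decaying faster than any power. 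It therefore suffices to synthesize each $\sin(nt)-n\sin t$, or a suitable basis of $\mathcal P$, with controlled $\ell^1$ cost, and then sum.

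For the local analysis, take $\theta=\varepsilon u$ with $u$ a real trigonometric polynomial. Since $\overline{e^{i\varepsilon u}}=e^{-i\varepsilon u}$ and $B_{\bar f}=-B_f$, the map $\varepsilon\mapsto B_{e^{i\varepsilon u}}$ is real-analytic and odd. The $\varepsilon^2$ term vanishes because $|\wh u(k)|=|\wh u(-k)|$, so $B_{e^{i\varepsilon u}}=\varepsilon^3C_3(u)+O(\varepsilon^5)$, where $C_3$ is a cubic form. Concretely, $C_3$ comes from $\operatorname{Im}$ of cross terms $\wh u(k)\,\overline{\widehat{u^2}(k)}$. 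For $u=\cos(ax)+\cos(bx)+\cos(cx)$ with $c=a+b$, only resonant triples contribute, and I expect $C_3(u)$ to be a fixed nonzero multiple of $c\sin(ct)-a\sin(at)-b\sin(bt)$ up to the normalization forced by $C_3(u)'(0)=0$. I would verify this formula by direct expansion. Using polarization, replacing $u$ by $u_1\pm u_2$ and combining, isolates these resonant triple blocks. Chaining $a=1,b=m-1,c=m$ then generates $\sin(nt)-n\sin t$ by a telescoping combination with coefficients of polynomial size in $n$. I would record a bounded right inverse $R$ of the leading operator, from $\mathcal P$ with a rapidly-decaying-coefficient norm into $\ell^1$ families of pairs $(\Lambda,u)$.

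To make the identity exact rather than leading-order, I would set up a Neumann iteration. Given a target $Q\in\mathcal P$, use $R$ to pick phases $\varepsilon_j u_j$ with $\sum\Lambda_j\varepsilon_j^3C_3(u_j)=Q$. The error $E=Q-\sum\Lambda_jB_{e^{i\varepsilon_ju_j}}$ again lies in $\mathcal P$, since all $B$'s do, and it is smaller by a factor $O(\varepsilon^2)$ in the weighted norm. Iterating and summing the geometric series gives absolutely convergent $\sum|\Lambda_\ell|$ and uniform convergence, because $|B_f|\le1$. The smallness condition \eqref{eq:synthesis-small} is imposed by choosing $\varepsilon$ depending on the frequency block, roughly $\varepsilon_n\sim\delta/n$. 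This costs a factor $n^3$ in $\Lambda$, which the rapid decay of $p_n$ absorbs.

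The main obstacle is the remainder estimate in the iteration. The higher-order terms $O(\varepsilon^5)$ of $B_{e^{i\varepsilon u}}$ spread to frequencies that are sums of up to five or more frequencies of $u$. The weighted norm on $\mathcal P$ must therefore be chosen so that the error both shrinks and stays summable against the polynomial growth of $R$. I would first try a norm $\sum_k k^{N}|q_k|$ with large $N$, and take $\varepsilon_n=c\delta n^{-M}$ with $M$ large enough that the $\varepsilon^2$ gain beats the frequency-spreading losses. If that fails, I would switch to exponential weights $e^{\kappa|k|}$ with $\kappa$ shrinking along the iteration.
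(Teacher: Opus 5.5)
Your architecture matches the paper's: reduce to $G_n(t)=\sin(nt)-n\sin t$, take the cubic resonant term of $B_{e^{i\varepsilon u}}$ as the leading operator, and close with a Neumann series in a weighted sine-coefficient norm. However, the leading-order step fails as written.

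For $u=\cos(ax)+\cos(bx)+\cos(cx)$ the map $e^{i\varepsilon u}$ is even. Hence $|\wh f(k)|=|\wh f(-k)|$ for all $k$, and $B_{e^{i\varepsilon u}}\equiv0$ to all orders in $\varepsilon$. In the resonant expansion, the triple $(a,b,-c)$ and its conjugate $(-a,-b,c)$ contribute $\pm\tfrac32\,iE_{a,b}$ and cancel. Polarizing among cosines cannot help, because the trilinear form vanishes on even inputs.

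One mode must be odd, as in the paper's $\phi_{r,s}=\cos(rx)/r+\cos(sx)/s+\sin((r+s)x)/(r+s)$. The cubic term is then a nonzero multiple of $E_{a,b}(t)=\sin(at)+\sin(bt)-\sin(ct)$, which carries no frequency weights. Your predicted form $c\sin(ct)-a\sin(at)-b\sin(bt)$ has derivative $c^2-a^2-b^2=2ab\neq0$ at the origin, so no normalization places it in $\mathcal P$. With the correct atoms your chain still telescopes: $G_m=G_{m-1}-E_{1,m-1}$, since $G_1=0$.

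The second gap is the remainder estimate, which you flag as the main obstacle and leave open. This is where the paper does its analytic work, and the frequency-spreading worry dissolves.

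Suppose the normalized phase $\tilde\phi$ satisfies $\|\tilde\phi^{(k)}\|_\infty\lesssim N^{k-1}$ for small $k$. Your scaling, equal amplitudes of size about $1/N$, gives exactly this. Then Cauchy--Schwarz bounds the fifth $\tau$-derivative of $B_{e^{i\tau\tilde\phi}}$ in the norm $\sum n^3|h_n|$ by $\sum_a\|\tilde\phi^a e^{i\tau\tilde\phi}\|_{\dot H^3}\|\tilde\phi^{5-a}e^{i\tau\tilde\phi}\|_2\lesssim N^{-2}$. This holds uniformly for $|\tau|\le1$ and controls all higher orders at once (Lemma~\ref{lem:fifth-order}). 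The cubic coefficient is of size $N^{-3}$, so the relative error is $O(\varepsilon^2/N^2)$ and a single fixed $\varepsilon$ suffices.

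To close in the $n^3$-weighted space, the paper also needs the $\ell^1$ cost of synthesizing $G_n$ to be $O(n^3)$. It obtains this by balanced binary splitting, which gives $\sum_v N_v^3\le\frac32 n^3$ (Lemma~\ref{lem:balanced-decomp}). Your chain with $a=1$ costs $\sum_{m\le n}m^3\sim n^4$, so your right inverse $R$ is unbounded in that norm.

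Your chain does close in the norm $\sum n^4|h_n|$, which contains every smooth $P$. There the same argument with $\dot H^4$ gives an error $\lesssim\varepsilon^2m^2$ per link and $\lesssim\varepsilon^2n^3$ per $G_n$, against $\|G_n\|\sim n^4$. So neither large $M$ nor exponential weights are needed. As written, however, your proposal establishes neither a nonzero leading operator nor the contraction.
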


The proof shows that the smoothness assumption on $P$ can be relaxed to $P\in Y_0$, where $Y_0$ is introduced in the next subsection. We will only apply this theorem in the smooth situation.

This whole section is dedicated to the proof of this theorem.


\subsection{A weighted sine space}

Let $Y$ be the Banach space of real odd functions
\begin{equation}\label{eq:Y-definition}
 H(t)=\sum_{n\geq1}h_n\sin(nt)
 \quad\text{such that}\quad
 \|H\|_Y:=\sum_{n\geq1}n^3|h_n|<\infty.
\end{equation}
The series and its first derivative converge absolutely and uniformly.  Define the closed subspace
\begin{equation}\label{eq:Y0-definition}
 Y_0:=\{H\in Y:H'(0)=0\}
 =\left\{H\in Y:\sum_{n\geq1}nh_n=0
 \right\}.
\end{equation}
For positive integers $r,s$, set
\begin{equation}\label{eq:Ers}
 E_{r,s}(t):=\sin(rt)+\sin(st)-\sin((r+s)t)
 \qquad\text{for all}\ t\in\T \,.
\end{equation}
We note that $E_{r,s}\in Y_0$.

\begin{lemma}[Balanced atomic decomposition]\label{lem:balanced-decomp}
There are a countable index set $\Gamma$ and, for each $\gamma\in\Gamma$, positive integers $r_\gamma$ and $s_\gamma$ satisfying
\[
 r_\gamma\leq s_\gamma\leq r_\gamma+1 \,,
\]
and a linear map $Y_0\ni H\mapsto (a_\gamma(H))_{\gamma\in\Gamma}$ such that, for all $H\in Y_0$,
\begin{equation}\label{eq:atomic-decomposition}
 H=\sum_{\gamma\in\Gamma} a_\gamma(H)E_{r_\gamma,s_\gamma}
\end{equation}
with convergence in $Y$ and, with $N_\gamma:=r_\gamma+s_\gamma$, 
\begin{equation}\label{eq:atomic-bound}
 \sum_{\gamma\in\Gamma} |a_\gamma(H)|N_\gamma^3
 \leq\frac32\|H\|_Y.
\end{equation}
\end{lemma}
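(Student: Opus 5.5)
The plan is to decompose each single mode $\sin(nt)$, after subtracting its contribution to $H'(0)$, along a binary splitting tree of $n$ into balanced halves. The constraint $H\in Y_0$ will remove the leftover $\sin t$ terms, and a short recursion bounds the cost by $\tfrac32 n^3$ per mode.

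\emph{Step 1 (one mode).} For $n\geq1$ set $D_n(t):=\sin(nt)-n\sin t$, so $D_1=0$. For $n\geq2$ put $r(n):=\lfloor n/2\rfloor$ and $s(n):=\lceil n/2\rceil$. Then $r(n)\leq s(n)\leq r(n)+1$ and $r(n)+s(n)=n$. By \eqref{eq:Ers},
\[
 \sin(nt)=\sin(r(n)t)+\sin(s(n)t)-E_{r(n),s(n)}(t),
\]
and since $n=r(n)+s(n)$ this gives
\[
 D_n=-E_{r(n),s(n)}+D_{r(n)}+D_{s(n)}.
\]
Unfolding this recursion along the binary tree $\mathcal T_n$ (root $n$, each node $m\geq2$ has children $r(m),s(m)$, leaves equal to $1$) gives the finite identity
\[
 D_n=-\sum_{m\in\mathcal T_n,\ m\geq2}E_{r(m),s(m)},
\]
where the sum runs over nodes counted with multiplicity. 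Every atom appearing is balanced and has $N=r(m)+s(m)=m$.

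\emph{Step 2 (cost recursion).} Let $C(n):=\sum_{m\in\mathcal T_n,\,m\geq2}m^3$. Then $C(1)=0$ and $C(n)=n^3+C(r(n))+C(s(n))$. I claim $C(n)\leq\tfrac32 n^3$, and prove it by induction.
\begin{itemize}
 \item For even $n$: $r^3+s^3=n^3/4$.
 \item For odd $n=2r+1\geq3$: $r^3+(r+1)^3=(n^3+3n)/4\leq n^3/3$, since $3n\leq n^3/3$ for $n\geq3$.
 \item For $n=2$: $r=s=1$, so $C(2)=8\leq 12$.
\end{itemize}
Hence in all cases $r^3+s^3\leq n^3/3$. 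The induction step is $C(n)\leq n^3+\tfrac32\cdot\tfrac{n^3}{3}=\tfrac32n^3$. This elementary inequality is the only place the constant $3/2$ enters. It is the step I expected to need care, and it closes because the balanced split makes $r^3+s^3$ at most a third of $n^3$.

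\emph{Step 3 (assembly).} Take $\Gamma:=\{(n,\nu):n\geq2,\ \nu\text{ a node of }\mathcal T_n\text{ with label }m_\nu\geq2\}$. Set $r_\gamma:=r(m_\nu)$, $s_\gamma:=s(m_\nu)$ and $a_\gamma(H):=-h_n$; this is linear in $H$. For $H\in Y_0$ we have $\sum_n nh_n=0$, and both series below converge absolutely. Hence
\[
 H=\sum_{n\geq1}h_n\sin(nt)=\sum_{n\geq2}h_nD_n+\Big(\sum_{n\geq1}nh_n\Big)\sin t=\sum_{\gamma\in\Gamma}a_\gamma(H)E_{r_\gamma,s_\gamma}.
\]
Here $N_\gamma=m_\nu$, so by Step 2
\[
 \sum_\gamma|a_\gamma(H)|N_\gamma^3=\sum_{n\geq2}|h_n|C(n)\leq\tfrac32\|H\|_Y,
\]
which is \eqref{eq:atomic-bound}.

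Convergence in $Y$ follows from $\|E_{r,s}\|_Y\leq r^3+s^3+(r+s)^3\leq 2N^3$. With \eqref{eq:atomic-bound}, this shows the series \eqref{eq:atomic-decomposition} converges absolutely in $Y$, and its coefficient sums agree with $H$ by the rearrangement above.
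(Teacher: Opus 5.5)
Your proposal is correct and follows essentially the same route as the paper. The paper also uses $G_n=\sin(nt)-n\sin t$ (your $D_n$), the recursion $G_n=G_r+G_s-E_{r,s}$ unfolded along the $\lfloor N/2\rfloor,\lceil N/2\rceil$ splitting tree, the cost recursion $C_n=n^3+C_r+C_s$ closed via $r^3+s^3\le n^3/3$, the index set $\{(n,v)\}$ with coefficients $a_{(n,v)}(H)=-h_n$, and the bound $\|E_{r,s}\|_Y\le 2(r+s)^3$ for absolute convergence in $Y$.
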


In what follows we will call a pair $(r,s)$ of positive integers \emph{balanced} if $r\leq s\leq r+1$. For example, the sum in \eqref{eq:atomic-decomposition} is over balanced pairs.

\begin{proof}
\emph{Step 1.} We expand $H$ in its sine series, $H(t)=\sum_{n\geq1}h_n\sin(nt)$. Since $H\in Y_0$, we have
\[
 h_1=-\sum_{n\geq2}nh_n \,.
\]
This allows us to write
\begin{equation}\label{eq:H-by-G}
 H=\sum_{n\geq2}h_nG_n \,,
\end{equation}
where, for $n\geq1$, we have set
\[
 G_n(t):=\sin(nt)-n\sin t \,.
\]
Note in particular, that $G_1=0$. In view of \eqref{eq:H-by-G} it suffices to decompose each $G_n$ into atoms $E_{r,s}$ with balanced pairs $(r,s)$.

\medskip

\emph{Step 2.} We shall show that for every $n$ there is a finite index set $\mathcal T_n$ such that
\begin{equation}\label{eq:G-tree}
    G_n=-\sum_{v\in\mathcal T_n}E_{r_v,s_v} \,.
\end{equation}

The basic ingredient in the proof of \eqref{eq:G-tree} is the elementary identity
\begin{equation}\label{eq:G-recursion}
 G_n=G_r+G_s-E_{r,s}
 \qquad\text{where}\ n= r+s \,.
\end{equation}

Before proving \eqref{eq:G-tree} for general $n$, we treat some basic examples. For $n=2$, we have, using $G_1=0$,
$$
G_2 = G_1 + G_1 - E_{1,1} = - E_{1,1} \,.
$$
Thus, $\mathcal T_2$ consists of a single element and $(r_v,s_v)=(1,1)$ for this element $v$. Next, for $n=3$, we have, using the formulas for $G_1$ and $G_2$, 
$$
G_3 = G_1 + G_2 - E_{1,2} = - E_{1,1} - E_{1,2} \,.
$$
Thus, $\mathcal T_3$ consists of two elements, and $(r_v,s_v)$ takes the two values $(1,1)$ and $(1,2)$. Next, for $n=4$, we have, using the formula for $G_2$,
$$
G_4 = G_2 + G_2 - E_{2,2} = - E_{1,1} - E_{1,1} - E_{2,2} \,.
$$
We emphasize that $\mathcal T_4$ consists of \emph{three} elements and that the pair $(1,1)$ appears \emph{twice}, while $(2,2)$ appears once.

We now generalize this procedure to arbitrary integers $n\geq 2$. We recursively split every integer $N\geq2$ into
\[
 r=\lfloor N/2\rfloor,
 \qquad
 s=\lceil N/2\rceil,
\]
and stop at the leaves labeled $1$.  Denote by $\mathcal T_n$ the set of internal vertices of this finite binary tree, with the two child subtrees regarded as disjoint copies when they have the same label.  Iterating \eqref{eq:G-recursion} and using $G_1=0$ yields the claimed decomposition \eqref{eq:G-tree}.

\medskip

\emph{Step 3.}
Denoting by $N_v=r_v+s_v$ the label at an internal vertex $v\in\mathcal T_n$, we claim that
\begin{equation}\label{eq:tree-cost}
 C_n:=\sum_{v\in\mathcal T_n}N_v^3 \leq\frac32n^3 \,.
\end{equation}
The assertion is trivial for $n=1$.  For $n\geq2$, with the balanced split $n=r+s$,
\[
 C_n=n^3+C_r+C_s.
\]
By induction,
\[
 C_n\leq n^3+\frac32(r^3+s^3).
\]
For a balanced pair, $r^3+s^3\leq n^3/3$.  When $n$ is even this is immediate; when $n\geq3$ is odd,
\[
 r^3+s^3=\frac{n^3+3n}{4}\leq\frac{n^3}{3}.
\]
This proves \eqref{eq:tree-cost}.

\medskip

\emph{Step 4.}
In Steps 1 and 2 we have shown that $H$ has a decomposition of the form \eqref{eq:atomic-decomposition} with 
$$
\Gamma:=\{ (n,v) :\ n\geq 2 \,,\ v\in\mathcal T_n \}
$$
and with coefficients $a_{(n,v)}(H)= - h_n$ depending linearly on $H$.

Now we use the bound from Step 3 to prove \eqref{eq:atomic-bound} and convergence in $Y$. Indeed, \eqref{eq:tree-cost} gives
\[
 \sum_\gamma|a_\gamma(H)|N_\gamma^3
 \leq\sum_{n\geq2}|h_n|C_n
 \leq\frac32\sum_{n\geq2}n^3|h_n| \leq \frac32 \|H\|_Y \,.
\]
Finally,
\begin{equation}
    \label{eq:normers}
    \|E_{r,s}\|_Y=r^3+s^3+(r+s)^3\leq2(r+s)^3,
\end{equation}
with the same bound when $r=s$ after combining the two equal sine terms.  Hence the expansion converges absolutely in $Y$.
\end{proof}

Different indices $\gamma$ may correspond to the same atom $E_{r_\gamma,s_\gamma}$, so representations in this redundantly indexed family are not intrinsically unique. For instance, the two distinct child vertices in the tree for $G_4$ both correspond to $E_{1,1}$. The construction above nevertheless fixes a particular bounded linear coefficient-selection map, satisfying \eqref{eq:atomic-bound}, and this fixed choice is used below.


\subsection{Exact cubic atoms}

For every balanced pair $(r,s)$, we define
\begin{equation}\label{eq:phi-rs}
 \phi_{r,s}(x):=
 \begin{cases}
 \displaystyle
 \frac{\cos(rx)}{r}+\frac{\cos(sx)}{s}
 +\frac{\sin((r+s)x)}{r+s},&r<s,\\[2.2ex]
 \displaystyle
 \frac{\cos(rx)}{r}+\frac{\sin(2rx)}{2r},&r=s.
 \end{cases}
\end{equation}

\begin{lemma}[Cubic autocorrelation atoms]\label{lem:cubic-atoms}
For every balanced pair $(r,s)$,
\begin{equation}\label{eq:cubic-atom}
 \int_\T
 \big(\phi_{r,s}(x+t)-\phi_{r,s}(x)\big)^3\,\frac{dx}{2\pi}
 =\lambda_{r,s}E_{r,s}(t),
\end{equation}
where
\begin{equation}\label{eq:lambda-rs}
 \lambda_{r,s}:=
 \begin{cases}
 \displaystyle-\frac{3}{rs(r+s)},&r<s,\\[1.5ex]
 \displaystyle-\frac{3}{4r^3},&r=s.
 \end{cases}
\end{equation}
Moreover, for $0\leq k\leq3$, we have, setting $N:=r+s$,
\begin{equation}\label{eq:phi-derivative-bound}
 \|\phi_{r,s}^{(k)}\|_\infty\leq C N^{k-1}.
\end{equation}
In particular,
\begin{equation}\label{eq:lambda-comparable}
 cN^{-3}\leq|\lambda_{r,s}|\leq CN^{-3}.
\end{equation}
\end{lemma}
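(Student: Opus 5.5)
The plan is to expand $\phi_{r,s}$ in complex exponentials and evaluate the $x$-average of the cube of its difference directly. Only resonant frequency triples survive the averaging. An oddness argument removes all but one family of them, and the surviving term gives $\lambda_{r,s}E_{r,s}$.

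\emph{Step 1: oddness in $t$.} Set $D_t(x):=\phi_{r,s}(x+t)-\phi_{r,s}(x)$ and $I(t):=\int_\T D_t^3\,\frac{dx}{2\pi}$. Substituting $x\mapsto x+t$ in the definition of $I(-t)$ gives $D_{-t}(x+t)=-D_t(x)$, so $I(-t)=-I(t)$. Hence only contributions that are odd in $t$ can survive.

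\emph{Step 2: resonant triples, case $r<s$.} Write $D_t(x)=\sum_{k\in\{r,s,N\}}\big(c_k(t)e^{ikx}+\overline{c_k(t)}e^{-ikx}\big)$. The coefficients are
\[
c_r=\tfrac{e^{irt}-1}{2r},\qquad c_s=\tfrac{e^{ist}-1}{2s},\qquad c_N=\tfrac{e^{iNt}-1}{2iN}.
\]
The average of $D_t^3$ only sees triples of frequencies $\pm k_1\pm k_2\pm k_3=0$ with $k_i\in\{r,s,N\}$. Since $s=r+1$, the possibilities are as follows.
\begin{itemize}
\item The generic resonance $r+s=N$. It contributes $3!\cdot 2\operatorname{Re}\big(c_rc_s\overline{c_N}\big)$.
\item The exceptional resonance $2r=s$ in the case $(r,s)=(1,2)$. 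It involves only cosine terms.
\item There are no others: $3k\neq0$, $2N\notin\{r,s\}$, and $2s\neq N$.
\end{itemize}

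\emph{Step 3: the surviving term.} The cosine-only parts of $D_t$ are invariant under $(x,t)\mapsto(-x,-t)$, so their cubic contributions are even in $t$. The sine part changes sign under this map. Therefore any term with an even number of sine factors is even in $t$, and by Step~1 all such terms cancel. This removes the $(1,2)$ exceptional resonance, which can also be checked directly to have zero real part. Only the generic term remains. Expanding
\[
6\cdot2\operatorname{Re}\frac{(e^{irt}-1)(e^{ist}-1)(e^{-iNt}-1)}{8\,rs\,(-i)N}
\]
and using $r+s=N$ should give $-\frac{3}{rsN}\big(\sin rt+\sin st-\sin Nt\big)$, which is \eqref{eq:cubic-atom} with the stated $\lambda_{r,s}$. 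Carrying out this bookkeeping of signs and multiplicities is the main (minor) obstacle.

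\emph{Step 4: the case $r=s$.} Here only the frequencies $r$ and $2r$ occur, with $D_t$ built from $\cos(rx)/r$ and $\sin(2rx)/(2r)$. The only odd resonance is $r+r=2r$, counted with multinomial factor $3$ rather than $6$. This produces $-\frac{3}{4r^3}\big(2\sin rt-\sin 2rt\big)=\lambda_{r,r}E_{r,r}$.

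\emph{Step 5: derivative bounds.} For \eqref{eq:phi-derivative-bound}, note that $\frac{d^k}{dx^k}\big(\cos(mx)/m\big)$ is bounded by $m^{k-1}$ for $m\in\{r,s,N\}$, and $m\leq N$. This gives $\|\phi_{r,s}^{(k)}\|_\infty\leq 3N^{k-1}$. Finally, \eqref{eq:lambda-comparable} follows from $N/2-1\leq r\leq s\leq N/2+1$ together with $r\geq1$, so that $rsN\asymp N^3$.
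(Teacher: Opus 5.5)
Your proposal is correct and follows essentially the paper's route (expand the difference in exponentials, keep only the resonant triples $(r,s,-N)$, resp.\ $(r,r,-2r)$, and their conjugates, and evaluate using $(e^{irt}-1)(e^{ist}-1)(e^{-iNt}-1)=2iE_{r,s}(t)$). Your parity argument under $(x,t)\mapsto(-x,-t)$ is a cleaner substitute for the paper's direct check that the exceptional $(1,2)$ resonance cancels.

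One small slip is in Step~5. The inequality $m\le N$ gives $m^{k-1}\le N^{k-1}$ only for $k\ge1$. For $k=0$ you need balance ($r\ge (N-1)/2$) to get $1/m\lesssim 1/N$, and the constant $3$ no longer suffices (e.g.\ $\phi_{1,2}(0)=3/2>3/N=1$).
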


\begin{proof}
The derivative bounds follow immediately from \eqref{eq:phi-rs} and the balance relations $r,s\simeq N$.

We prove \eqref{eq:cubic-atom} by Fourier expansion.  If
\[
 \phi(x)=\sum_{k\in\Z}\phi_k e^{ikx},
\]
then
\begin{equation}\label{eq:cubic-fourier-general}
 \int_\T(\phi(x+t)-\phi(x))^3\,\frac{dx}{2\pi}
 =\sum_{k_1+k_2+k_3=0}
 \phi_{k_1}\phi_{k_2}\phi_{k_3}
 \prod_{j=1}^3(e^{ik_jt}-1).
\end{equation}
For positive $r,s$ and $N=r+s$, one has
\begin{equation}\label{eq:triangle-product}
 (e^{irt}-1)(e^{ist}-1)(e^{-iNt}-1)
 =2iE_{r,s}(t).
\end{equation}

Suppose first that $r<s$.  The relevant Fourier coefficients are
\[
 \phi_{\pm r}=\frac1{2r},
 \qquad
 \phi_{\pm s}=\frac1{2s},
 \qquad
 \phi_N=-\frac{i}{2N},
 \qquad
 \phi_{-N}=\frac{i}{2N}.
\]
The only nonvanishing resonant contribution to \eqref{eq:cubic-fourier-general} comes from permutations of $(r,s,-N)$ and $(-r,-s,N)$.  There is one apparent exception: when $(r,s)=(1,2)$, the triples $(r,r,-s)$ and $(-r,-r,s)$ are also resonant.  Their common Fourier coefficient is real, while
\[
 (e^{irt}-1)^2(e^{-i2rt}-1)=2iE_{r,r}(t),
 \qquad
 (e^{-irt}-1)^2(e^{i2rt}-1)=-2iE_{r,r}(t).
\]
Thus these two contributions cancel exactly.  Using the six permutations of each of the two main resonances and \eqref{eq:triangle-product}, we obtain
\begin{align*}
 &6\left(\frac1{2r}\right)
 \left(\frac1{2s}\right)
 \left(\frac{i}{2N}\right)(2iE_{r,s})\\
 &\qquad+
 6\left(\frac1{2r}\right)
 \left(\frac1{2s}\right)
 \left(-\frac{i}{2N}\right)(-2iE_{r,s})
 =-\frac{3}{rsN}E_{r,s}.
\end{align*}

If $r=s$, the only resonance is $(r,r,-2r)$ and its conjugate.  There are three permutations of each.  Since
\[
 \phi_{\pm r}=\frac1{2r},
 \qquad
 \phi_{2r}=-\frac{i}{4r},
 \qquad
 \phi_{-2r}=\frac{i}{4r},
\]
formula \eqref{eq:triangle-product} gives
\[
 -\frac{3}{4r^3}E_{r,r}(t).
\]
This proves \eqref{eq:cubic-atom}.  The comparability \eqref{eq:lambda-comparable} follows from balance.
\end{proof}


\subsection{A uniform fifth-order estimate}

For a balanced pair $(r,s)$, we set
$$
b_{r,s}(\tau,t) := B_{e^{i\tau\phi_{r,s}}}(t) = \operatorname{Im}\int_\T
 e^{i\tau(\phi_{r,s}(x+t)-\phi_{r,s}(x))}\,\frac{dx}{2\pi}.
$$

\begin{lemma}[Fifth-order bound]\label{lem:fifth-order}
There exists a universal constant $C$ such that for every balanced pair $(r,s)$ and every $|\tau|\leq1$, we have, with $N:=r+s$,
\begin{equation}\label{eq:fifth-order-bound}
 \left\|\frac{\partial^5}{\partial\tau^5}
 b_{r,s}(\tau)\right\|_Y
 \leq C N^{-2}.
\end{equation}
\end{lemma}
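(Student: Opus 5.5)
The plan is to make the $\tau$-dependence explicit by a power series in $\tau$, and then estimate each term in the $Y$-norm using only the finite Fourier support of the atom $\phi_{r,s}$. Write $\Delta(x,t):=\phi_{r,s}(x+t)-\phi_{r,s}(x)$. Differentiating under the integral, which is harmless since everything is smooth and bounded, gives
\[
 \frac{\partial^5}{\partial\tau^5}b_{r,s}(\tau,t)
 =\operatorname{Im}\int_\T i^5\Delta^5e^{i\tau\Delta}\,\frac{dx}{2\pi}
 =\int_\T\Delta^5\cos(\tau\Delta)\,\frac{dx}{2\pi}
 =\sum_{m\geq0}\frac{(-1)^m\tau^{2m}}{(2m)!}\,F_{5+2m}(t),
\]
where $F_p(t):=\int_\T\Delta(x,t)^p\,\frac{dx}{2\pi}$. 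It then suffices to bound $\|F_p\|_Y$ for odd $p\geq5$ with explicit dependence on $p$ and $N$, and to sum over $m$.

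\emph{Step 1: $F_p$ is a sine polynomial.} For odd $p$, the substitution $y=x+t$ gives $F_p(-t)=\int_\T(\phi(y)-\phi(y+t))^p\,\frac{dy}{2\pi}=-F_p(t)$, so $F_p$ is real and odd, hence a pure sine series. Since $\phi_{r,s}$ has Fourier support in $\{\pm r,\pm s,\pm N\}$, the expansion used in \eqref{eq:cubic-fourier-general} shows that $F_p$ is a trigonometric polynomial in $t$. Its frequencies lie in
\[
 \{ar+bs+cN:\ a,b,c\in\Z,\ |a|+|b|+|c|\leq p\}.
\]
Consequently every frequency satisfies $|n|\leq pN$, and the number of distinct frequencies is at most $Cp^3$.

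\emph{Step 2: size of the coefficients.} Each sine coefficient of $F_p$ is bounded by $2\|F_p\|_\infty\leq2\|\Delta\|_\infty^p$. From \eqref{eq:phi-rs} and the balance $r,s\geq N/2-1/2$ we get $\|\Delta\|_\infty\leq2\|\phi_{r,s}\|_\infty\leq C_0/N$ for a universal constant $C_0$. Combining this with Step 1,
\[
 \|F_p\|_Y\leq Cp^3\,(pN)^3\,(C_0/N)^p=C\,p^6C_0^p\,N^{3-p}.
\]

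\emph{Step 3: summation.} For $|\tau|\leq1$ and $N\geq2$, we have $N^{3-p}\leq N^{-2}$ for every $p\geq5$. Therefore
\[
 \Bigl\|\frac{\partial^5}{\partial\tau^5}b_{r,s}(\tau)\Bigr\|_Y
 \leq CN^{-2}\sum_{m\geq0}\frac{(5+2m)^6C_0^{5+2m}}{(2m)!},
\]
and the series converges because of the factorial. The series for $\partial_\tau^5 b_{r,s}$ converges absolutely in $Y$, so the term-by-term computation is justified. This gives \eqref{eq:fifth-order-bound}.

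The main obstacle is the one the proposal is built to avoid. A direct Faà di Bruno estimate of $t$-derivatives of $\int\Delta^5\cos(\tau\Delta)$, combined with Cauchy--Schwarz to control $\sum n^3|h_n|$, only yields $N^{-1}$ or $N^{-3/2}$. It loses a power of $N$ because it does not see that the $t$-spectrum is concentrated on $O(1)$ frequencies of size $\simeq N$. The power-series expansion makes this spectral sparsity exact. Hence the only thing to check carefully is the $p$-dependence in Step 1, and polynomial growth in $p$ is enough.
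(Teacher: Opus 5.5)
Your argument is correct, and it follows a genuinely different route from the paper's.

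\textbf{Your route.} You work in the variable $t$. Expanding $\cos(\tau\Delta)$ reduces everything to the moments $F_p(t)=\int_\T\Delta^p\,\frac{dx}{2\pi}$. Since $\phi_{r,s}$ has only the frequencies $\pm r,\pm s,\pm N$, each $F_p$ is a sine polynomial with $O(p^3)$ frequencies, all of size at most $pN$. Hence $\|F_p\|_Y\lesssim p^6N^3\|\Delta\|_\infty^p$, and the factorials absorb the growth in $p$.

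\textbf{The paper's route.} The paper works on the Fourier side in $x$. By \eqref{eq:B-sine-series}, the sine coefficients of $b_{r,s}(\tau)$ are $|\wh{f_\tau}(n)|^2-|\wh{f_\tau}(-n)|^2$, which are bilinear in $\wh{f_\tau}$. After five $\tau$-derivatives, Cauchy--Schwarz therefore puts the whole weight $|n|^3$ on one factor. With $u_a=\phi^a e^{i\tau\phi}$ this gives $\|u_a\|_{\dot H^3}\|u_{5-a}\|_2\lesssim N^{3-a}N^{a-5}=N^{-2}$.

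\textbf{What each buys.} The paper's proof uses only the derivative bounds \eqref{eq:phi-derivative-bound}, not the finite spectrum of $\phi_{r,s}$, so it applies to more general phases. It also shows that the loss you describe in your last paragraph comes from estimating $t$-derivatives; it is not something only spectral sparsity can cure.

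Your route gives more structure for free. The same bounds show that
\[
b_{r,s}(\tau)=\sum_{k\geq1}\frac{(-1)^k\tau^{2k+1}}{(2k+1)!}F_{2k+1}
\]
is an entire $Y$-valued power series (recall $F_1=0$). This settles the $Y$-valued differentiability, which the paper justifies separately via \eqref{eq:weighted-bilinear}. It even yields the Taylor estimate \eqref{eq:b-taylor} directly, since $F_3=\lambda_{r,s}E_{r,s}$ and $\|F_{2k+1}\|_Y\leq C(2k+1)^6C_0^{2k+1}N^{2-2k}$.

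\textbf{One suggestion.} The lemma is used through the Banach-valued Taylor formula. So state this $Y$-valued expansion of $b_{r,s}$ itself explicitly, rather than only the pointwise-in-$t$ identity for $\partial_\tau^5 b_{r,s}$.
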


In the proof we will use the following observation concerning the imaginary part of the autocorrelation function. By Parseval's identity, we have, for each $h\in L^2(\T)$,
    $$
    \int_\T h(x+t) \overline{h(x)}\,\frac{dx}{2\pi} = \sum_{n\in\Z} |\wh h(n)|^2 e^{int} \,.
    $$
    The series on the right side is absolutely and uniformly convergent. Taking the imaginary part, we find
    \begin{equation}\label{eq:B-sine-series}
        B_h(t) = \sum_{n\in\Z} |\wh h(n)|^2 \sin(nt) = \sum_{n\geq 1} \left( |\wh h(n)|^2 - |\wh h(-n)|^2 \right) \sin(nt) \,.
    \end{equation}

\begin{proof}
We abbreviate $\phi:=\phi_{r,s}$ and $f_\tau:=e^{i\tau\phi}$.  Put
\[
 u_a:=\phi^a e^{i\tau\phi},
 \qquad 0\leq a\leq5.
\]
We first justify differentiation in the weighted Fourier space used below. For
smooth functions $u,v$, Cauchy--Schwarz gives
\begin{equation}\label{eq:weighted-bilinear}
 \sum_{n\in\Z}|n|^3
 |\wh u(n)\overline{\wh v(n)}|
 \leq \|u\|_{\dot H^3}\|v\|_2.
\end{equation}
Moreover, for every $0\leq a\leq5$, the map
$\tau\mapsto \phi^a e^{i\tau\phi}$ is smooth with values in both $H^3$ and
$L^2$. Thus \eqref{eq:weighted-bilinear} and the Banach-valued product rule
show that the Fourier-energy sequence
$\tau\mapsto (|\wh{f_\tau}(n)|^2)_{n\in\Z}$ is five times continuously
differentiable with values in
$\ell^1(\Z\setminus\{0\};|n|^3)$. Differentiation gives
\begin{equation}\label{eq:fifth-energy-derivative}
    \frac{d^5}{d\tau^5} \, |\wh{f_\tau}(n)|^2
 =\sum_{a=0}^5\binom5a i^a(-i)^{5-a}
 \wh u_a(n)\,\overline{\wh u_{5-a}(n)} \,.
\end{equation}
It follows from \eqref{eq:B-sine-series} with $h=f_\tau$ that the sine coefficient of $b_{r,s}(\tau)$ at frequency $n\geq1$ is $|\wh f_\tau(n)|^2 - |\wh f_\tau(-n)|^2$.  Hence
\begin{align}
 \left\|\frac{\partial^5}{\partial\tau^5}
 b_{r,s}(\tau)\right\|_Y
 &\leq\sum_{n\in\Z}|n|^3 \left|\frac{d^5}{d\tau^5} \, |\wh{f_\tau}(n)|^2 \right| \notag\\
 &\lesssim \sum_{a=0}^5
 \left(\sum_n|n|^6|\wh u_a(n)|^2\right)^{1/2}
 \left(\sum_n|\wh u_{5-a}(n)|^2\right)^{1/2}\notag\\
 &=\sum_{a=0}^5
 \|u_a\|_{\dot H^3}\,\|u_{5-a}\|_2 \,.
 \label{eq:Y-by-Sobolev}
\end{align}
We used Cauchy--Schwarz in the second line.

The derivative bounds \eqref{eq:phi-derivative-bound}, the product rule, and $|\tau|\leq1$ imply
\begin{equation}\label{eq:ua-estimates}
 \|u_a\|_{H^3}\lesssim N^{3-a},
 \qquad
 \|u_a\|_2\lesssim N^{-a},
 \qquad \text{for all}\ 0\leq a\leq5.
\end{equation}
For completeness, we prove these bounds.  For $0\leq j\leq3$, every term obtained by applying $j$ derivatives to $\phi^a e^{i\tau\phi}$ is a constant multiple of
\[
 \phi^{a-q}
 \prod_{\nu=1}^{q}\phi^{(k_\nu)}
 \prod_{\mu=1}^{v}\phi^{(\ell_\mu)}
 e^{i\tau\phi},
\]
where $0\leq q\leq a$, all $k_\nu,\ell_\mu\geq1$, and
\[
 \sum_{\nu=1}^{q}k_\nu+
 \sum_{\mu=1}^{v}\ell_\mu=j.
\]
The first product records derivatives falling on $\phi^a$, while the second records those generated by differentiating the exponential.  By \eqref{eq:phi-derivative-bound}, its absolute value is at most a constant times
\[
 N^{-(a-q)}
 N^{\sum_\nu(k_\nu-1)}
 N^{\sum_\mu(\ell_\mu-1)}
 = N^{j-a-v} \leq N^{j-a} \,.
\]
Taking $j=0,1,2,3$ proves \eqref{eq:ua-estimates}.  

Thus every product in \eqref{eq:Y-by-Sobolev} is bounded by a constant times 
\[
 N^{3-a}N^{-(5-a)}= N^{-2} \,.
\]
This proves \eqref{eq:fifth-order-bound}.
\end{proof}

\begin{lemma}[Approximate triangle atoms]\label{lem:approx-atoms}
For $0<\varepsilon\leq1/2$, define
\begin{equation}\label{eq:Etilde}
 \widetilde E_{r,s}^{\varepsilon}
 :=\frac{2B_{e^{i\varepsilon\phi_{r,s}}}
 -B_{e^{2i\varepsilon\phi_{r,s}}}}
 {\varepsilon^3\lambda_{r,s}}.
\end{equation}
Then $\widetilde E_{r,s}^{\varepsilon}\in Y_0$ and
\begin{equation}\label{eq:Etilde-error}
 \|\widetilde E_{r,s}^{\varepsilon}-E_{r,s}\|_Y
 \leq C\varepsilon^2(r+s).
\end{equation}
\end{lemma}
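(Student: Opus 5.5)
Write $\phi=\phi_{r,s}$ and $b(\tau):=b_{r,s}(\tau,\cdot)=B_{e^{i\tau\phi}}$. The plan is to Taylor expand $b(\tau)$ at $\tau=0$ in the Banach space $Y$ up to order five. The combination $2b(\varepsilon)-b(2\varepsilon)$ kills the linear term and the cubic term survives. The even-order terms vanish by symmetry, and the remainder is handled by Lemma~\ref{lem:fifth-order}.

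\emph{Membership in $Y_0$.} By \eqref{eq:B-sine-series}, $B_h$ is real and odd, and its sine coefficients are $|\wh h(n)|^2-|\wh h(-n)|^2$. For the smooth function $h=e^{i\tau\phi}$ these coefficients decay rapidly, so $B_h\in Y$. Moreover $B_h'(0)=\operatorname{Im}\int h'\bar h\,\frac{dx}{2\pi}=\tau\int\phi'\,\frac{dx}{2\pi}=0$, since $h'\bar h=i\tau\phi'$ and $\phi$ is periodic. Hence $B_h\in Y_0$, and $\widetilde E^\varepsilon_{r,s}\in Y_0$ by linearity.

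\emph{Taylor coefficients.} Expanding
\[
b(\tau)(t)=\operatorname{Im}\int e^{i\tau D_t\phi}\,\frac{dx}{2\pi},\qquad D_t\phi(x)=\phi(x+t)-\phi(x),
\]
the coefficient of $\tau^k$ is $\operatorname{Im}\bigl(i^k/k!\bigr)\int (D_t\phi)^k\,\frac{dx}{2\pi}$. Since $\phi$ is real, these moments are real, so only odd $k$ contribute. For $k=1$ the moment is $\int D_t\phi=0$. For $k=3$, Lemma~\ref{lem:cubic-atoms} gives the coefficient $-\tfrac16\lambda_{r,s}E_{r,s}$. Thus, as an identity in $Y$ (differentiability in $Y$ was justified in the proof of Lemma~\ref{lem:fifth-order}; the derivatives at $0$ agree with the pointwise computation because the $Y$-norm dominates the sup norm),
\[
b(\tau)=-\frac{\tau^3}{6}\lambda_{r,s}E_{r,s}+\frac{\tau^4}{24}\,b^{(4)}(0)+R_5(\tau),
\qquad
\|R_5(\tau)\|_Y\le\frac{|\tau|^5}{120}\sup_{|\sigma|\le1}\|b^{(5)}(\sigma)\|_Y\le C|\tau|^5N^{-2}
\]
for $|\tau|\le1$, by Taylor's formula with integral remainder for $Y$-valued $C^5$ functions and \eqref{eq:fifth-order-bound}. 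Here $b^{(4)}(0)=\operatorname{Im}\int(D_t\phi)^4=0$, and similarly $b^{(2)}(0)=0$.

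\emph{Conclusion.} With $|2\varepsilon|\le1$,
\[
2b(\varepsilon)-b(2\varepsilon)=-\frac{\lambda_{r,s}E_{r,s}}{6}\bigl(2\varepsilon^3-8\varepsilon^3\bigr)+2R_5(\varepsilon)-R_5(2\varepsilon)=\varepsilon^3\lambda_{r,s}E_{r,s}+O_Y\bigl(\varepsilon^5N^{-2}\bigr).
\]
Dividing by $\varepsilon^3\lambda_{r,s}$ and using $|\lambda_{r,s}|\ge cN^{-3}$ from \eqref{eq:lambda-comparable} gives the error bound $C\varepsilon^2N^{-2}N^3=C\varepsilon^2(r+s)$, which is \eqref{eq:Etilde-error}.

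The main point needing care is the vanishing of all even-order Taylor coefficients, in particular the $\tau^4$ term. Its size, about $N^{-1}$ in $Y$, would otherwise be too large after division by $\varepsilon^3\lambda_{r,s}$. This vanishing rests only on the reality of $\phi$, so that $b$ is an odd function of $\tau$. The rest of the argument is routine once Lemma~\ref{lem:fifth-order} supplies the uniform fifth-derivative bound.
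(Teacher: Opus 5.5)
Your proof is correct and follows essentially the same route as the paper: you Taylor expand $b_{r,s}(\tau)$ in $Y$ to fifth order, use oddness in $\tau$ to kill the even terms (you phrase this as reality of the moments $\int(D_t\phi)^k$, the paper as $B_{e^{-i\tau\phi}}=-B_{e^{i\tau\phi}}$), and use $b'(0)=0$. You then take the cubic coefficient from Lemma~\ref{lem:cubic-atoms}, bound the remainder by Lemma~\ref{lem:fifth-order}, and divide using $|\lambda_{r,s}|\geq cN^{-3}$, with the $Y_0$ membership argued via $B_h'(0)=\tau\int\phi'=0$ exactly as in the paper.
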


\begin{proof}
The map $\tau\mapsto b_{r,s}(\tau)$ is odd because
\[
 B_{e^{-i\tau\phi}}=-B_{e^{i\tau\phi}}.
\]
Consequently,
\[
 b_{r,s}(0)=b_{r,s}^{(2)}(0)=b_{r,s}^{(4)}(0)=0.
\]
Furthermore,
\[
 b_{r,s}'(0,t)
 =\int_\T(\phi(x+t)-\phi(x))\,\frac{dx}{2\pi}=0,
\]
and Lemma~\ref{lem:cubic-atoms} gives
\[
 b_{r,s}^{(3)}(0)=-\lambda_{r,s}E_{r,s}.
\]
The Banach-valued Taylor formula and Lemma~\ref{lem:fifth-order} therefore yield
\begin{equation}\label{eq:b-taylor}
 \left\|b_{r,s}(\tau)
 +\frac{\tau^3\lambda_{r,s}}6E_{r,s}\right\|_Y
 \leq C|\tau|^5N^{-2}
 \qquad(|\tau|\leq1).
\end{equation}
The cubic terms in $2b(\varepsilon)-b(2\varepsilon)$ combine to $\varepsilon^3\lambda_{r,s}E_{r,s}$.  Applying \eqref{eq:b-taylor} at $\varepsilon$ and $2\varepsilon$, dividing by $\varepsilon^3|\lambda_{r,s}|$, and using \eqref{eq:lambda-comparable} gives \eqref{eq:Etilde-error}.

Finally, if $f=e^{i\theta}$ with a smooth periodic real phase function, then
\[
 B_f'(0)=\operatorname{Im}\int_\T f'(x)\overline{f(x)}\,\frac{dx}{2\pi}
 =\int_\T\theta'(x)\,\frac{dx}{2\pi}=0.
\]
Moreover, $f$ and $B_f$ are smooth, so the sine coefficients of $B_f$ have
the weighted summability required in the definition of $Y$. Thus each term
in \eqref{eq:Etilde} lies in $Y_0$.
\end{proof}


\subsection{Proof of the small-phase autocorrelation synthesis theorem}

We are now in a position to prove the main result of this section.

\begin{proof}[Proof of Theorem \ref{thm:synthesis}]
\emph{Step 1.}
According to Lemma~\ref{lem:balanced-decomp}, we have a bounded linear coefficient-selection map $Y_0\ni H \mapsto (a_\gamma(H))$. In terms of this map, for $0<\varepsilon\leq1/2$ and $H\in Y_0$, set
\begin{equation}\label{eq:S-epsilon}
 S_\varepsilon H
 :=\sum_{\gamma\in\Gamma} a_\gamma(H)
 \widetilde E_{r_\gamma,s_\gamma}^{\varepsilon}.
\end{equation}
This series converges absolutely in $Y$. Indeed, by \eqref{eq:normers} and \eqref{eq:Etilde-error},
\[
 \|\widetilde E_{r,s}^{\varepsilon}\|_Y
 \leq \|E_{r,s}\|_Y+C\varepsilon^2(r+s)
 \leq C(r+s)^3,
\]
and therefore \eqref{eq:atomic-bound} implies
\[
 \sum_\gamma |a_\gamma(H)|
 \|\widetilde E_{r_\gamma,s_\gamma}^{\varepsilon}\|_Y
 <\infty.
\]
By Lemma \ref{lem:approx-atoms}, we have $\widetilde E_{r,s}^{\varepsilon}\in Y_0$, so the closedness of $Y_0$ implies that $S_\varepsilon H\in Y_0$. To summarize, $S_\varepsilon$ is a bounded linear operator from $Y_0$ to $Y_0$.

By \eqref{eq:Etilde-error} and \eqref{eq:atomic-bound},
\begin{align*}
 \|(S_\varepsilon-I)H\|_Y
 &\leq C\varepsilon^2
 \sum_\gamma|a_\gamma(H)|N_\gamma\\
 &\leq C\varepsilon^2
 \sum_\gamma|a_\gamma(H)|N_\gamma^3
 \leq C\varepsilon^2\|H\|_Y.
\end{align*}
Choose $0<\varepsilon\leq1/2$ so small that
\begin{equation}\label{eq:Neumann-small}
 \|S_\varepsilon-I\|_{Y_0\to Y_0} \leq \frac12
\end{equation}
and, in addition, $2C_0\varepsilon<\delta$, where $C_0:= \sup_{(r,s) \,\mathrm{balanced}} \|\phi_{r,s}\|_{C^1}$, which is finite by Lemma~\ref{lem:cubic-atoms}.  Then $S_\varepsilon$ is invertible by the Neumann series.  

\medskip

\emph{Step 2.}
Let $P$ be a smooth, real, odd, $2\pi$-periodic function with $P'(0)=0$. Then $P\in Y_0$. Put
\[
 H:=S_\varepsilon^{-1}P \,.
\]
By \eqref{eq:S-epsilon} and \eqref{eq:Etilde},
\begin{equation}\label{eq:P-expanded-B}
 P
 =\sum_\gamma
 \frac{a_\gamma(H)}{\varepsilon^3\lambda_{r_\gamma,s_\gamma}}
 \left(2B_{e^{i\varepsilon\phi_{r_\gamma,s_\gamma}}}
 -B_{e^{2i\varepsilon\phi_{r_\gamma,s_\gamma}}}
 \right).
\end{equation}
The scalar coefficients in this expansion are summable.  Indeed, by \eqref{eq:lambda-comparable} and \eqref{eq:atomic-bound},
\begin{equation}\label{eq:coefficient-total-variation}
 \sum_\gamma
 \frac{3|a_\gamma(H)|}{\varepsilon^3|\lambda_{r_\gamma,s_\gamma}|}
 \leq C\varepsilon^{-3}
 \sum_\gamma|a_\gamma(H)|N_\gamma^3
 <\infty.
\end{equation}
Since $|B_f(t)|\leq1$, expansion \eqref{eq:P-expanded-B} therefore converges absolutely and uniformly after its two terms are enumerated as a single sequence.  Regrouping the two consecutive terms associated with each atom recovers the $Y$-convergent grouped series defining $S_\varepsilon H=P$. Since the split series converges unconditionally and uniformly, its uniform sum is therefore exactly $P$.  Finally, the phases in \eqref{eq:P-expanded-B} are $\varepsilon\phi_{r,s}$ and $2\varepsilon\phi_{r,s}$; their $C^1$ norms are smaller than $\delta$ by our choice of $\varepsilon$.
\end{proof}


\section{Kahane's cubic function and the critical barycenter}\label{sec:kahane}

The purpose of this section is to prove the following proposition. We recall that the notation $B_f$ was introduced in \eqref{eq:imag-autocorrelation}.

\begin{proposition}[Critical barycenter]\label{prop:critical-barycenter}
For every $r>0$, there are real numbers $(\Lambda_\ell)_{\ell\geq1}$ with
\begin{equation}\label{eq:Lambda-l1}
 \sum_{\ell\geq1}|\Lambda_\ell|<\infty \,,
\end{equation}
a family of real functions
\[
 \eta_{\ell,\tau}\in C^{0,1/3}(\T),
 \qquad \ell\geq1,\ \tau\in\T \,,
\]
such that, for every $\ell$, the map $(\tau,x)\mapsto\eta_{\ell,\tau}(x)$ is continuous, with
\begin{equation}\label{eq:small-barycenter-phases}
 \|\eta_{\ell,\tau}\|_{C^{0,1/3}}<r,
\end{equation}
and a constant $\kappa\ne0$ such that, for $h_{\ell,\tau}:=e^{i\eta_{\ell,\tau}}$, one has $\degmap h_{\ell,\tau}=0$ and
\begin{equation}\label{eq:critical-barycenter}
 \sum_{\ell\geq1}\Lambda_\ell
 \int_\T B_{h_{\ell,\tau}}(t)\,\frac{d\tau}{2\pi}
 =\kappa\sin t
 \qquad\text{for every }t\in\T.
\end{equation}
\end{proposition}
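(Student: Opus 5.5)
The plan is to produce the $\sin t$ term from a single critical (lacunary) phase, scaled down to have small $C^{0,1/3}$ norm, and to cancel everything else with Theorem~\ref{thm:synthesis}, whose phases are smooth and hence automatically small in $C^{0,1/3}$. The two ingredients therefore play complementary roles. Kahane's lacunary cubic function is the only source of a non-smooth contribution, namely the multiple of $\sin t$. Everything else is meant to be smooth, odd, and to lie in $Y_0$, so that Theorem~\ref{thm:synthesis} can absorb it.

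\emph{Step 1 (Kahane input).} I would first isolate Kahane's lemma (Lemma~\ref{lem:kahane}) in the following form. There is a real lacunary function $g\in C^{0,1/3}(\T)$, for instance $g(x)=\sum_k 3^{-k/3}\cos(3^kx)$ or a similar triadic sum, together with a continuous family of translates or phase rotations $g_\tau$ of its lacunary blocks, such that
\[
\int_\T\int_\T \big(g_\tau(x+t)-g_\tau(x)\big)^3\,\frac{dx}{2\pi}\frac{d\tau}{2\pi} = c_0\sin t + Q_0(t),
\]
with $c_0\neq0$ and $Q_0\in Y_0$. The $\tau$-average is what kills the off-diagonal resonances $k_1+k_2+k_3=0$ among the lacunary frequencies, apart from those which telescope. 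This telescoping is the same mechanism as in Lemma~\ref{lem:cubic-atoms}, and it is where the exact $\sin t$ comes from. The point to check is that the $\tau$-averaged cubic form differs from $c_0 \sin t$ only by a function that is smooth, odd, and has vanishing derivative at $0$.

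\emph{Step 2 (scaling and odd Taylor expansion).} Put $\eta_\tau:=\varepsilon g_\tau$, which has $C^{0,1/3}$ norm below $r/2$ for small $\varepsilon$. Since $\eta_\tau$ is periodic and continuous, $\degmap e^{i\eta_\tau}=0$. As in Lemma~\ref{lem:approx-atoms}, the averaged quantity $\beta(\varepsilon):=\int B_{e^{i\varepsilon g_\tau}}\,d\tau/2\pi$ is odd in $\varepsilon$ and has vanishing linear term. I would use the Richardson combination $2\beta(\varepsilon)-\beta(2\varepsilon)$ and expand the exponential in powers of $\varepsilon$ (the phase is bounded), which gives
\[
2\beta(\varepsilon)-\beta(2\varepsilon)=-\varepsilon^3\big(c_0\sin t+Q_0\big)+\varepsilon^5 R_\varepsilon(t),
\]
with $R_\varepsilon$ odd and uniformly bounded. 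The delicate claim is that $R_\varepsilon$ again splits as $c_\varepsilon\sin t+Q_\varepsilon$, with $Q_\varepsilon\in Y_0$ and $|c_\varepsilon|$ bounded. The quintic and higher averaged moments of a lacunary series should likewise reduce to finitely many telescoping resonances contributing only to the $\sin t$ direction plus smooth remainders. I expect this to be the main obstacle. The first thing I would try is to prove it by bounding the Fourier coefficients of the averaged moments: the $\tau$-average forces every resonance to involve a chain of consecutive lacunary frequencies, and the weights $3^{-k/3}$ then give $\sum n^3|h_n|<\infty$ away from the $\sin t$ mode.

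\emph{Step 3 (synthesis and assembly).} The total is then $-(c_0\varepsilon^3-c_\varepsilon\varepsilon^5)\sin t + P$, where $P:=-\varepsilon^3Q_0+\varepsilon^5Q_\varepsilon\in Y_0$. Set $\kappa:=-(c_0\varepsilon^3-c_\varepsilon\varepsilon^5)$, which is nonzero for small $\varepsilon$. Apply Theorem~\ref{thm:synthesis} to $-P$, using the remark that $P\in Y_0$ suffices, with $\delta<r$. This gives smooth phases $\theta_\ell$ with $\|\theta_\ell\|_{C^1}<\delta$, which control the $C^{0,1/3}$ norm up to a constant absorbed in $\delta$. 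Take these as $\tau$-independent members $\eta_{\ell,\tau}:=\theta_\ell$, with $\ell^1$ coefficients. Together with the two Kahane terms $\eta_{\tau}=\varepsilon g_\tau$ and $2\varepsilon g_\tau$, with coefficients $2,-1$, this yields \eqref{eq:critical-barycenter}. Continuity in $(\tau,x)$, the bound \eqref{eq:small-barycenter-phases}, and degree zero all hold by construction.
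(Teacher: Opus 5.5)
There is a genuine gap, and it is the step you flagged yourself. The claim that the Richardson remainder $R_\varepsilon$ equals $c_\varepsilon\sin t$ plus an element of $Y_0$ is false. Theorem~\ref{thm:synthesis} is only available for $P\in Y_0$, so it cannot absorb this remainder.

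A side remark on Step~1: a triadic sum is useless there, since $\pm3^a\pm3^b\pm3^c\neq0$ and so its cubic moment vanishes. Take Kahane's dyadic $\psi$ from Lemma~\ref{lem:kahane} instead; then the cubic term is exactly $\sin t$, $Q_0=0$, and no averaging is needed.

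Now write $\psi=\sum_j a_j\sin(2^jx)$ with $a_j=A2^{-j/3}$, set $d_j:=\sin(2^j(x+t))-\sin(2^jx)$, and let $M_5(t):=\int_\T(\psi(x+t)-\psi(x))^5\frac{dx}{2\pi}$. The order-$\varepsilon^5$ part of $2\beta(\varepsilon)-\beta(2\varepsilon)$ is $-\frac{\varepsilon^5}{4}M_5$. This quintic moment contains the cubic resonance $(1,1,-2)$ coupled to a neutral pair $(2^m,-2^m)$. For $m\geq2$ the index multiset $\{0,0,1,m,m\}$ contributes
\[
30\,a_0^2a_1a_m^2\int_\T d_0^2\,d_1\,d_m^2\,\frac{dx}{2\pi}
=30\,a_0^2a_1a_m^2\,\bigl(1-\cos(2^mt)\bigr)\bigl(\sin t-\tfrac12\sin(2t)\bigr),
\]
which contains the mode $-15\,a_0^2a_1a_m^2\sin((2^m+1)t)$. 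For large $m$, the only other index patterns reaching frequency $2^m+1$ are $\{0,0,m-1,m-1,m\}$ and $\{0,0,m,m,m+1\}$, and both carry weight $O(2^{-m})$.

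Hence the sine coefficients of $M_5$ at $n=2^m+1$ have size $\asymp n^{-2/3}$, and $\sum_n n^3|h_n|=\infty$ no matter which multiple of $\sin t$ you remove. One checks that the higher-order terms only change these coefficients by a factor $1+O(\varepsilon^2)$.

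The source of this roughness is the structure function $\int_\T|\psi(x+t)-\psi(x)|^2\frac{dx}{2\pi}\approx|t|^{2/3}$ multiplying the cubic resonance. It is also why Lemma~\ref{lem:fifth-order}, which relies on $\|u_a\|_{H^3}$, has no analogue for a $C^{0,1/3}$ phase. Averaging over $\tau$ does not help either. $B$ is translation invariant. Under block phase rotations the pair $(2^m,-2^m)$ is invariant, while the term above carries the same phase factor $\cos(\tau_1-2\tau_0)$ that produces $c_0$, so it cannot be removed without killing $c_0$.

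The missing idea is to place the rough function \emph{inside} the smooth phases, and to use Theorem~\ref{thm:synthesis} in the increment variable rather than in the lag variable $t$.
\begin{itemize}
\item The paper sets $u=\rho\psi$ and applies the theorem to $P(s)=\chi(s)s^3$. This $P$ is smooth and odd, has $P'(0)=0$, and equals $s^3$ for $|s|\le2\|u\|_\infty$.
\item It then takes $\eta_{\ell,\tau}(x)=\theta_\ell(u(x)+\tau)$. Averaging over the target shift $\tau$ gives
$\int_\T B_{h_{\ell,\tau}}(t)\frac{d\tau}{2\pi}=\int_\T B_{e^{i\theta_\ell}}(u(x+t)-u(x))\frac{dx}{2\pi}$.
\item Summing against $\Lambda_\ell$ therefore yields exactly $\int_\T(u(x+t)-u(x))^3\frac{dx}{2\pi}=\rho^3\sin t$, with no Taylor remainder at any order.
\item The smallness \eqref{eq:small-barycenter-phases} follows from $\|\theta_\ell\circ(u+\tau)\|_{C^{0,1/3}}\le\|\theta_\ell\|_\infty+\|\theta_\ell'\|_\infty[u]_{C^{0,1/3}}$.
\end{itemize}
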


To prove this proposition, we will apply Theorem~\ref{thm:synthesis} with a particular function $P$. That function will be constructed in terms of a function that Kahane used in a related context \cite{Kahane2005}.

Before presenting the details of this argument, we deduce a simple consequence of it, which will be crucial in the proof of our main result.

For a bounded sequence $q:\Z\to\C$ and $f\in L^2(\T)$, set
\begin{equation}\label{eq:Qq}
 Q_q(f):=\sum_{n\in\Z}q(n)|\wh f(n)|^2.
\end{equation}
This series is absolutely convergent. If $q$ is real-valued and odd, then
$Q_q(f)$ is real and
\begin{equation}\label{eq:Qq-odd}
 Q_q(f)
 =\sum_{n\geq1}q(n)
 \big(|\wh f(n)|^2-|\wh f(-n)|^2\big).
\end{equation}

\begin{corollary}\label{cor:barycenter-multiplier}
Given $r>0$, let $(\Lambda_\ell)_{\ell\geq1}$, $(h_{\ell,\tau})_{\ell\geq 1, \tau\in\T}$ and $\kappa\neq 0$ be as in Proposition~\ref{prop:critical-barycenter}. Then every bounded real odd sequence $s$ satisfies
\begin{equation}\label{eq:barycenter-Q}
 \sum_{\ell\geq1}\Lambda_\ell
 \int_\T Q_s(h_{\ell,\tau})\,\frac{d\tau}{2\pi}
 =\kappa s(1).
\end{equation}
\end{corollary}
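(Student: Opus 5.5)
The plan is to pass from the identity \eqref{eq:critical-barycenter}, which concerns the functions $B_{h_{\ell,\tau}}$, to the quadratic forms $Q_s$ by extracting the first sine coefficient. By \eqref{eq:B-sine-series}, for any $h\in L^2(\T)$ the $n$-th sine coefficient of $B_h$ is $c_n(h):=|\wh h(n)|^2-|\wh h(-n)|^2$, and by \eqref{eq:Qq-odd} we have $Q_s(h)=\sum_{n\geq1}s(n)c_n(h)$.

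First we would integrate \eqref{eq:critical-barycenter} against $\sin(nt)\,dt/\pi$. The left side converges uniformly in $t$, since $|B_h|\leq1$ and $\sum|\Lambda_\ell|<\infty$. Continuity of $(\tau,x)\mapsto\eta_{\ell,\tau}(x)$ makes $\tau\mapsto h_{\ell,\tau}$ continuous into $L^2$, so all integrals are measurable and bounded. Fubini therefore gives
\[
 \sum_{\ell}\Lambda_\ell\int_\T c_n(h_{\ell,\tau})\,\frac{d\tau}{2\pi}=\kappa\,\delta_{n,1}\qquad(n\geq1).
\]

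Second, we would multiply by $s(n)$ and sum over $n$. The interchange of the $n$-sum with the $\ell$-sum and the $\tau$-integral requires a dominating bound:
\[
 \sum_\ell|\Lambda_\ell|\int_\T\sum_{n\geq1}|s(n)|\,|c_n(h_{\ell,\tau})|\,\frac{d\tau}{2\pi}\leq\|s\|_\infty\sum_\ell|\Lambda_\ell|\int_\T\|h_{\ell,\tau}\|_2^2\,\frac{d\tau}{2\pi}=\|s\|_\infty\sum_\ell|\Lambda_\ell|<\infty,
\]
where we used $\sum_n|\wh h(n)|^2=\|h\|_2^2=1$ for unimodular $h$. Dominated convergence (Fubini--Tonelli on $\ell\times\tau\times n$) then gives
\[
 \sum_\ell\Lambda_\ell\int_\T Q_s(h_{\ell,\tau})\,\frac{d\tau}{2\pi}=\sum_{n\geq1}s(n)\,\kappa\,\delta_{n,1}=\kappa\, s(1),
\]
which is \eqref{eq:barycenter-Q}.

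The main point needing care is this domination and the measurability in $\tau$, and the bound by $\|h\|_2^2=1$ handles both, so no obstacle is expected. The assumption that $s$ is real and odd is used only to write $Q_s$ via \eqref{eq:Qq-odd}; the value $s(0)$ plays no role.
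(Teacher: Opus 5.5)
Your proposal is correct and follows the paper's proof essentially step by step. You extract the $n$-th sine coefficient of \eqref{eq:critical-barycenter}, using uniform convergence from $|B_h|\le 1$ and $\sum_\ell|\Lambda_\ell|<\infty$, to get $\kappa\mathbf 1_{\{n=1\}}$; you then multiply by $s(n)$ and sum, justifying the interchange by $\|s\|_\infty$, $\sum_\ell|\Lambda_\ell|<\infty$ and $\sum_n|\wh h(n)|^2=1$. Your explicit remark on measurability in $\tau$, via the continuity of $(\tau,x)\mapsto\eta_{\ell,\tau}(x)$, is a small point the paper leaves implicit.
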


\begin{proof}
    Equation \eqref{eq:imag-autocorrelation} implies that, for each $h\in L^2(\T)$,
    $$
    2 \int_\T B_h(t) \sin(nt)\,\frac{dt}{2\pi} = |\wh h(n)|^2 - |\wh h(-n)|^2 
    \qquad\text{for all}\, n\geq1 \,.
    $$

    We apply this identity to $h=h_{\ell,\tau}$, integrate with respect to $\tau$, multiply by $\Lambda_\ell$ and sum over $\ell$ to obtain
    $$
    2 \int_\T \left( \sum_{\ell} \Lambda_\ell \int_\T B_{h_{\ell,\tau}}(t)\,\frac{d\tau}{2\pi} \right) \sin(nt)\,\frac{dt}{2\pi} = \sum_{\ell} \Lambda_\ell \int_\T \left( |\wh h_{\ell,\tau}(n)|^2 - |\wh h_{\ell,\tau}(-n)|^2 \right)\frac{d\tau}{2\pi}
    \quad\text{for all}\ n\geq 1\,.
    $$
    On the left side we interchanged the $t$-integration with the $\tau$-integration and the $\ell$-summation, which is justified since the series in \eqref{eq:critical-barycenter} converges uniformly because of $|B_{h_{\ell,\tau}}|\leq1$ and \eqref{eq:Lambda-l1}.

    Proposition \ref{prop:critical-barycenter} allows us to rewrite this as
    $$
    \kappa \mathbf 1_{\{n=1\}} = 2\kappa \int_\T \sin(t) \sin(nt)\,\frac{dt}{2\pi}
    = \sum_{\ell} \Lambda_\ell \int_\T \left( |\wh h_{\ell,\tau}(n)|^2 - |\wh h_{\ell,\tau}(-n)|^2 \right)\frac{d\tau}{2\pi}
    \qquad\text{for all}\ n\geq 1    \,.
    $$
    Multiplying by $s(n)$ and summing over $n\geq1$ gives \eqref{eq:barycenter-Q}. Here the interchange of the $n$-summation and the $\ell$-summation and the $\tau$-integration is justified by boundedness of $s$, \eqref{eq:Lambda-l1}, and $\sum_{n\in\Z}|\wh h_{\ell,\tau}(n)|^2=1$.
\end{proof}


\subsection{The critical cubic identity}

Let
\begin{equation}\label{eq:kahane-function}
 \psi(x):= \frac{2^{1/9}}{3^{1/3}} \,\sum_{j=0}^{\infty}2^{-j/3}\sin(2^j x) \qquad\text{for}\ x\in\T \,.
\end{equation}

The following lemma is due to Kahane
\cite{Kahane2005}. We use the corrected lower summation
index $j=0$: in \cite[Eq.~(13)]{Kahane2005} it is printed as $j=1$, while
\cite[Eq.~(17)]{Kahane2005} and the direct computation below show that
$j=0$ is required. We include a proof for completeness.

\begin{lemma}\label{lem:kahane}
The series \eqref{eq:kahane-function} defines a function in $C^{0,1/3}(\T;\R)$ and
\begin{equation}\label{eq:kahane-identity}
 \int_\T
 \big(\psi(x+t)-\psi(x)\big)^3\,\frac{dx}{2\pi}
 =\sin t
 \qquad\text{for all}\ t\in\T \,.
\end{equation}
\end{lemma}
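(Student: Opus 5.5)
The plan is to prove the two assertions separately: the Hölder regularity by a standard lacunary dyadic splitting, and the cubic identity \eqref{eq:kahane-identity} by the same Fourier resonance bookkeeping as in the proof of Lemma~\ref{lem:cubic-atoms}.

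\emph{Hölder regularity.} Write $c:=2^{1/9}3^{-1/3}$. The series \eqref{eq:kahane-function} converges absolutely and uniformly, so $\psi$ is continuous and real. Fix $0<|h|\leq\pi$ and choose $J\geq0$ with $2^{J}\leq |h|^{-1}<2^{J+1}$. For the low frequencies $j\leq J$ use $|\sin(2^j(x+h))-\sin(2^jx)|\leq 2^j|h|$; the sum is $\lesssim |h|\sum_{j\leq J}2^{2j/3}\lesssim |h|\,2^{2J/3}\lesssim |h|^{1/3}$. For the high frequencies $j>J$ bound each difference by $2$; the sum is $\lesssim\sum_{j>J}2^{-j/3}\lesssim 2^{-J/3}\lesssim|h|^{1/3}$. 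Together these give $[\psi]_{C^{0,1/3}}<\infty$.

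\emph{Cubic identity.} The Fourier coefficients of $\psi$ are supported on $\{\pm2^j: j\geq0\}$, with $\psi_{2^j}=-\tfrac{i}{2}c\,2^{-j/3}$ and $\psi_{-2^j}=\tfrac{i}{2}c\,2^{-j/3}$. Since $\sum_k|\psi_k|<\infty$, I will expand the cube of the absolutely convergent series $\psi(x+t)-\psi(x)=\sum_k\psi_k(e^{ikt}-1)e^{ikx}$ and integrate termwise. The triple series converges absolutely because each factor $|e^{ik_jt}-1|\leq2$. This justifies formula \eqref{eq:cubic-fourier-general} for $\psi$, even though $\psi$ is not smooth.

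\emph{Resonances.} Next I classify the resonant triples $k_1+k_2+k_3=0$ with $k_j\in\{\pm2^j\}$. Up to sign, two entries must have the same sign and sum to the third: $2^a+2^b=2^c$. This forces $a=b$ and $c=a+1$, since a sum of two distinct powers of $2$ has two binary digits. So the only resonances are the permutations of $(2^a,2^a,-2^{a+1})$ and of its negative, three of each.

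\emph{Evaluation.} Using \eqref{eq:triangle-product} with $r=s=M:=2^a$, i.e. $(e^{iMt}-1)^2(e^{-2iMt}-1)=2iE_{M,M}(t)$, each positive triple contributes
\[
\psi_{M}^2\psi_{-2M}\cdot 2iE_{M,M}=\tfrac{c^3}{4}\,2^{-a-1/3}E_{M,M}.
\]
The conjugate triples contribute the same real amount. Including the multiplicity $3$, level $a$ contributes $\tfrac32 c^3 2^{-1/3}\,2^{-a}\bigl(2\sin(2^at)-\sin(2^{a+1}t)\bigr)$. Summing over $a\geq0$, the series $\sum_a 2^{-a}(2\sin(2^at)-\sin(2^{a+1}t))$ telescopes to $2\sin t$: the coefficient of $\sin(2^at)$ for $a\geq1$ is $2\cdot2^{-a}-2^{-(a-1)}=0$. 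Hence the integral equals $3c^3 2^{-1/3}\sin t$, and $c^3=2^{1/3}/3$ gives exactly $\sin t$. This also confirms that the lower index must be $j=0$: starting at $j=1$ would give $\sin 2t$ up to scaling.

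The only step needing care is the justification of termwise integration and rearrangement for the nonsmooth $\psi$. It is settled by the $\ell^1$ summability of the coefficients. The binary-digit uniqueness of resonances is the conceptual core, and everything else is arithmetic.
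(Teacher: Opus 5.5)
Your proof is correct and is essentially the paper's argument: the same low/high dyadic splitting for the H\"older bound, the same binary-uniqueness resonance $2^a+2^a=2^{a+1}$ with multiplicity $3$, and the same telescoping. The only difference is cosmetic: you justify expanding the cube of the full series through $\sum_k|\psi_k|<\infty$, whereas the paper first proves the exact truncated identity $\sin t-2^{-L}\sin(2^Lt)$ for $\psi_L$ and then lets $L\to\infty$.

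There is one small slip in the H\"older part. For $1<|h|\le\pi$ no $J\ge0$ with $2^J\le|h|^{-1}$ exists. That range needs the separate trivial bound $|\psi(x+h)-\psi(x)|\le2\|\psi\|_\infty\le2\|\psi\|_\infty|h|^{1/3}$, as in the paper.
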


\begin{proof}
Uniform convergence is immediate from $\sum_j2^{-j/3}<\infty$.  To prove the H\"older bound, fix $0<|h|\leq1$ and choose $m\geq0$ so that
\[
 2^{-m-1}<|h|\leq2^{-m}.
\]
For the low frequencies, we bound
\begin{align*}
 \sum_{j=0}^m2^{-j/3}
 |\sin(2^j(x+h))-\sin(2^j x)|
 &\leq |h|\sum_{j=0}^m2^{2j/3}\\
 &\lesssim |h|2^{2m/3}
 \lesssim |h|^{1/3}.
\end{align*}
For the high frequencies, we bound
\[
 \sum_{j>m}2^{-j/3}
 |\sin(2^j(x+h))-\sin(2^j x)|
 \leq2\sum_{j>m}2^{-j/3}
 \lesssim 2^{-m/3}
 \lesssim |h|^{1/3}.
\]
This proves the required estimate for $0<|h|\leq1$. For increments with
$|h|>1$, we have
\[
 |\psi(x+h)-\psi(x)|
 \leq2\|\psi\|_\infty
 \leq2\|\psi\|_\infty |h|^{1/3},
\]
so the estimate holds for all increments on $\T$. Thus
$\psi\in C^{0,1/3}$.

Let $A:= \frac{2^{1/9}}{3^{1/3}}$ and
\[
 \psi_L(x):=A\sum_{j=0}^L2^{-j/3}\sin(2^j x) \,.
\]
In the integral of $(\psi_L(x+t)-\psi_L(x))^3$, a triple of dyadic frequencies can sum to zero only through
\[
 2^j+2^j=2^{j+1},
 \qquad 0\leq j<L.
\]
Indeed, if one power of two is the sum of two others, uniqueness of binary expansion forces the two smaller powers to be equal.  We now compute the contribution of this resonance.  Put $k:=2^j$.  The elementary formula
\[
 \sin(k(x+t))-\sin(kx)
 =2\sin(kt/2)\cos(kx+kt/2)
\]
and the identity
\[
 \int_\T\cos^2(kx+a)\cos(2kx+2a)\,\frac{dx}{2\pi}=\frac14
\]
give
\begin{align*}
 &\int_\T
 \big(\sin(k(x+t))-\sin(kx)\big)^2
 \big(\sin(2k(x+t))-\sin(2kx)\big)\,\frac{dx}{2\pi}\\
 &\qquad=2\sin^2(kt/2)\sin(kt)
 =\sin(kt)-\frac12\sin(2kt).
\end{align*}
The multinomial coefficient is $3$, and the product of the three lacunary amplitudes is
\[
 A^3 2^{-2j/3}2^{-(j+1)/3}=A^3 2^{-j-1/3}.
\]
Summing over $j$ therefore gives
\begin{align}
 &\int_\T
 \big(\psi_L(x+t)-\psi_L(x)\big)^3\,\frac{dx}{2\pi}\notag\\
 &\qquad=\frac32A^3 2^{-1/3}
 \sum_{j=0}^{L-1}2^{-j}
 \big(2\sin(2^j t)-\sin(2^{j+1}t)\big).\label{eq:finite-kahane}
\end{align}
The sum telescopes:
\[
 \sum_{j=0}^{L-1}2^{-j}
 \big(2\sin(2^j t)-\sin(2^{j+1}t)\big)
 =2\big(\sin t-2^{-L}\sin(2^L t)\big).
\]
Since $3A^3 2^{-1/3}=1$, we obtain the exact finite identity
\begin{equation}\label{eq:finite-kahane-exact}
 \int_\T
 \big(\psi_L(x+t)-\psi_L(x)\big)^3\,\frac{dx}{2\pi}
 =\sin t-2^{-L}\sin(2^L t).
\end{equation}
The functions $\psi_L$ converge uniformly to $\psi$, so the cubic integrands converge uniformly in $x$ for each $t$; in fact the convergence is uniform in $(x,t)$.  Passing to the limit in \eqref{eq:finite-kahane-exact} proves \eqref{eq:kahane-identity}.
\end{proof}


\subsection{Proof of the critical barycenter proposition}

\begin{proof}[Proof of Proposition~\ref{prop:critical-barycenter}]
Fix $r>0$ and let $\psi$ be given by \eqref{eq:kahane-function}. We choose the constant $\rho>0$ so small that the function
\begin{equation}\label{eq:u-rho-psi}
 u:=\rho\psi,
\end{equation}
satisfies
\begin{equation}\label{eq:u-range-small}
 2\|u\|_\infty<\frac{\pi}{2}.
\end{equation}
Then, by Lemma~\ref{lem:kahane},
\begin{equation}\label{eq:u-cubic}
 \int_\T(u(x+t)-u(x))^3\,\frac{dx}{2\pi}
 =\rho^3\sin t.
\end{equation}

Let $\chi:\R\to\R$ be a smooth even function that equals $1$ on the interval
\[
 [-2\|u\|_\infty,2\|u\|_\infty]
\]
and has compact support in a compact subinterval of $(-\pi,\pi)$. We set
\begin{equation}\label{eq:P-cutoff}
 P(s):=\chi(s)s^3
 \qquad\text{for}\ -\pi<s<\pi
\end{equation}
and extend $P$ periodically to $\R$. In this way we obtain a smooth, real, odd function with $P'(0)=0$.

We apply Theorem~\ref{thm:synthesis} with a number $\delta>0$ to be fixed below and obtain the decomposition
\begin{equation}\label{eq:P-synthesis-barycenter}
 P(s)=\sum_{\ell\geq1}\Lambda_\ell B_{e^{i\theta_\ell}}(s),
\end{equation}
where
\begin{equation}\label{eq:barycenter-synthesis-bounds}
 \sum_\ell|\Lambda_\ell|<\infty,
 \qquad
 \|\theta_\ell\|_{C^1}<\delta.
\end{equation}
For $\ell\geq1$ and $\tau\in\T$, define
\begin{equation}\label{eq:h-composition}
 \eta_{\ell,\tau}(x):=\theta_\ell(u(x)+\tau)
 \qquad\text{for all}\ x\in\T \,.
\end{equation}
Here $\theta_\ell$ is viewed as a $2\pi$-periodic function on $\R$. Every function $e^{i\eta_{\ell,\tau}}$ has degree zero because $\eta_{\ell,\tau}$ is a real-valued periodic lift.  Moreover,
\begin{align}
 \|\eta_{\ell,\tau}\|_{C^{0,1/3}}
 &\leq\|\theta_\ell\|_\infty
 +\|\theta_\ell'\|_\infty[u]_{C^{0,1/3}}\notag\\
 &\leq\delta\big(1+[u]_{C^{0,1/3}}\big).\label{eq:composition-holder}
\end{align}
At this point we choose $\delta$ so small that the last expression is less than $r$.  This proves \eqref{eq:small-barycenter-phases}.

It remains to compute the averaged autocorrelation. Set $h_{\ell,\tau}:=e^{i\eta_{\ell,\tau}}$. For fixed $\ell$ and $t$, Fubini's theorem and a change of variables give
\begin{align}
 \int_\T B_{h_{\ell,\tau}}(t)\,\frac{d\tau}{2\pi}
 &=\operatorname{Im}\int_\T\int_\T
 e^{i\eta_{\ell,\tau}(x+t)} e^{-i\eta_{\ell,\tau}(x)}
 \,\frac{dx}{2\pi}\,\frac{d\tau}{2\pi}\notag\\
 &=\operatorname{Im}\int_\T\int_\T
 e^{i\theta_\ell(u(x+t)+\tau)}
 e^{-i\theta_\ell(u(x)+\tau)}
 \,\frac{dx}{2\pi}\,\frac{d\tau}{2\pi}\notag\\
 &=\int_\T B_{e^{i\theta_\ell}}(u(x+t)-u(x))\,\frac{dx}{2\pi}.
 \label{eq:composition-average}
\end{align}
Indeed, for each fixed $x$ one makes the change of variable $\tau'=\tau+u(x)$ in the inner integral.

We multiply \eqref{eq:composition-average} by $\Lambda_\ell$ and sum over $\ell$. On the right side we interchange the $x$-integration with the $\ell$-summation, which is justified by \eqref{eq:barycenter-synthesis-bounds} and $|B_{e^{i\theta_\ell}}|\leq1$. It follows from \eqref{eq:P-synthesis-barycenter}, \eqref{eq:u-range-small} and \eqref{eq:P-cutoff} that
$$
\sum_{\ell\geq 1} \Lambda_\ell B_{e^{i\theta_\ell}}(u(x+t)-u(x)) = P(u(x+t)-u(x)) = (u(x+t)-u(x))^3 \,.
$$
Thus, \eqref{eq:composition-average} yields
\begin{align*}
 \sum_\ell\Lambda_\ell
 \int_\T B_{h_{\ell,\tau}}(t)\,\frac{d\tau}{2\pi}
 &=\int_\T(u(x+t)-u(x))^3\,\frac{dx}{2\pi}
 =\rho^3\sin t,
\end{align*}
where the last equality follows from \eqref{eq:u-cubic}. Thus \eqref{eq:critical-barycenter} holds with $\kappa :=\rho^3\ne0$.
\end{proof}


\section{Proof of the main theorem}\label{sec:main-proof}

The purpose of this section is to prove Theorem \ref{thm:main}.

We assume, toward a contradiction, that there is a family $(\sigma_{n,\varepsilon})_{n\in\Z,\,0<\varepsilon<1}\subset\C$ satisfying \eqref{eq:process-bounded} and \eqref{eq:process-pointwise} such that for every $f\in C^{0,1/3}(\T;\Sone)$ we have
\[
 \sum_{n\in\Z}n\,|\wh f(n)|^2\sigma_{n,\varepsilon}
 \longrightarrow \degmap f
 \qquad\text{as }\varepsilon\downarrow0.
\]
We fix an arbitrary sequence $\varepsilon_j\downarrow0$ and put
\[
 q_j(n):=n\sigma_{n,\varepsilon_j}.
\]
Then \eqref{eq:process-bounded} and \eqref{eq:process-pointwise} imply that
\begin{align}
    \label{eq:process-bounded1}
    & \sup_{n\in\Z} |q_j(n)| < \infty
    && \text{for every}\ j \\
    \label{eq:q-pointwise}
    & q_j(n)\longrightarrow n
    && \text{for every}\ n\in\Z \,,
\end{align}
as well as
\begin{equation}\label{eq:q-recovers-degree}
 Q_{q_j}(f)\longrightarrow\degmap f
 \qquad\text{for every}\ f\in C^{0,1/3}(\T;\Sone) \,.
\end{equation}
We recall that $Q_q(f)$ was defined in \eqref{eq:Qq}.


\subsection{Reduction to real odd multipliers}

In this subsection we argue that, in addition to \eqref{eq:process-bounded1}, \eqref{eq:q-pointwise} and \eqref{eq:q-recovers-degree} we may assume that, for every $j$,
\begin{equation}\label{eq:q-odd}
    q_j \ \text{is real-valued and odd.}
\end{equation}

Indeed, given $q_j$ satisfying \eqref{eq:process-bounded1}, \eqref{eq:q-pointwise} and \eqref{eq:q-recovers-degree}, let
\begin{equation}\label{eq:odd-part}
 \widetilde q_j(n)
 :=\frac12\operatorname{Re}\big(q_j(n)-q_j(-n)\big) \,.
\end{equation}
Then clearly $\widetilde q_j$ still satisfies \eqref{eq:process-bounded1} and \eqref{eq:q-pointwise}, and it is real-valued and odd. Moreover, for $f\in C(\T;\Sone)$,
\[
 |\wh{\overline f}(n)| = |\wh f(-n)|
 \quad\text{for all}\ n\in\Z \,,
 \qquad
 \degmap(\overline f)=-\degmap f \,.
\]
Therefore, for all $f\in C^{0,1/3}(\T;\Sone)$,
\[
 Q_{\widetilde q_j}(f)
 =\frac12\operatorname{Re}
 \big(Q_{q_j}(f)-Q_{q_j}(\overline f)\big)
 \longrightarrow\degmap f.
\]
This shows that $\widetilde q_j$ still satisfies \eqref{eq:q-recovers-degree}.

This observation allows us to assume henceforth that $q_j$ satisfies \eqref{eq:process-bounded1}, \eqref{eq:q-pointwise}, \eqref{eq:q-recovers-degree} and \eqref{eq:q-odd}.


\subsection{A Baire-category ball}

Let
\[
 X:=C^{0,1/3}(\T;\R) \,,
\]
which is a real Banach space. If $\varphi\in X$, then $e^{i\varphi}\in C^{0,1/3}(\T;\Sone)$ and, since $e^{i\varphi}$ has the periodic lift $\varphi$, $\deg e^{i\varphi}=0$. Thus, \eqref{eq:q-recovers-degree} gives
\begin{equation}\label{eq:F-pointwise-zero}
 Q_{q_j}(e^{i\varphi})\longrightarrow0
 \qquad\text{for every }\varphi\in X.
\end{equation}

\begin{lemma}[Baire localization]\label{lem:baire-ball}
There are $\varphi_0\in X$, $r>0$, and $M<\infty$ such that for every $j\geq1$ and every $\eta\in X$ with $\|\eta\|_{C^{0,1/3}}<r$ we have
\begin{equation}\label{eq:baire-bound}
 |Q_{q_j}(e^{i(\varphi_0+\eta)})|\leq M \,.
\end{equation}
\end{lemma}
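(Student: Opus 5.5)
The plan is a standard Baire category argument on the complete metric space $X=C^{0,1/3}(\T;\R)$. For $M\in\N$ define
\[
 F_M:=\bigl\{\varphi\in X:\ |Q_{q_j}(e^{i\varphi})|\leq M\ \text{for all}\ j\geq1\bigr\}.
\]
By \eqref{eq:F-pointwise-zero}, for each fixed $\varphi\in X$ the real sequence $(Q_{q_j}(e^{i\varphi}))_j$ converges, hence is bounded. Therefore $X=\bigcup_{M\geq1}F_M$. Once each $F_M$ is shown to be closed, the Baire category theorem produces some $M$ such that $F_M$ has nonempty interior. That is, there are $\varphi_0\in X$ and $r>0$ with $\varphi_0+\eta\in F_M$ whenever $\|\eta\|_{C^{0,1/3}}<r$, which is exactly \eqref{eq:baire-bound}.

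The only point requiring verification is the closedness of $F_M$. Since $F_M=\bigcap_j\{\varphi:|Q_{q_j}(e^{i\varphi})|\leq M\}$, it suffices to show that for each fixed $j$ the map $\varphi\mapsto Q_{q_j}(e^{i\varphi})$ is continuous on $X$. Continuity with respect to the uniform norm already suffices, because the $C^{0,1/3}$ norm dominates the $L^\infty$ norm.

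First, $|e^{ia}-e^{ib}|\leq|a-b|$ gives $\|e^{i\varphi}-e^{i\psi}\|_{L^2}\leq\|\varphi-\psi\|_{L^\infty}$. Next, let $f=e^{i\varphi}$ and $g=e^{i\psi}$. By \eqref{eq:process-bounded1}, $\|q_j\|_\infty<\infty$, so
\[
 |Q_{q_j}(f)-Q_{q_j}(g)|\leq\|q_j\|_\infty\sum_n\bigl||\wh f(n)|^2-|\wh g(n)|^2\bigr|
 \leq\|q_j\|_\infty\sum_n|\wh f(n)-\wh g(n)|\bigl(|\wh f(n)|+|\wh g(n)|\bigr).
\]
By Cauchy--Schwarz and Parseval, with $\|f\|_2=\|g\|_2=1$, the right side is at most $2\|q_j\|_\infty\|f-g\|_2$. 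This proves continuity, hence each $F_M$ is closed.

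Nothing here is a serious obstacle. The one subtlety is that the bound on $q_j$ is not uniform in $j$, but this is harmless because continuity is needed only for each fixed $j$. Uniformity in $j$ is supplied by Baire's theorem through the pointwise convergence \eqref{eq:F-pointwise-zero}.
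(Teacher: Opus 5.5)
Your proposal is correct and follows essentially the same route as the paper: the closed sets $\{\varphi\in X:\sup_j|Q_{q_j}(e^{i\varphi})|\leq M\}$ cover $X$ by the pointwise convergence, closedness follows from continuity of $\varphi\mapsto e^{i\varphi}$ into $L^2$ together with the fixed-$j$ Lipschitz bound for $Q_{q_j}$ on $L^2$, and Baire's theorem then yields the ball. Your estimate $2\|q_j\|_\infty\|f-g\|_2$ is exactly the paper's bound $\|q_j\|_\infty(\|f\|_2+\|h\|_2)\|f-h\|_2$ specialized to unimodular maps.
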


\begin{proof}
For each $N\in\N$, the set
\[
 E_N:=\{\varphi\in X: \ \sup_{j\geq1}|Q_{q_j}(e^{i\varphi})|\leq N\}
\]
is closed. Indeed, this follows from the continuity of $\varphi\mapsto e^{i\varphi}$ from $X$ to $L^2$ and from the continuity $f\mapsto Q_{q_j}(f)$ from $L^2$ to $\R$. More explicitly,
\[
 |Q_{q_j}(f)-Q_{q_j}(h)|
 \leq \|q_j\|_\infty
 \big(\|f\|_2+\|h\|_2\big)\|f-h\|_2.
\]

By \eqref{eq:F-pointwise-zero}, for every fixed $\varphi\in X$, the sequence $(Q_{q_j}(e^{i\varphi}))_j$ is bounded, so $X=\bigcup_{N\geq1}E_N$.  The Baire category theorem (see, e.g., \cite[Theorem 5.1]{VanNeerven2022}) implies that some $E_N$ has nonempty interior.  Any open ball contained in that interior gives the conclusion.
\end{proof}

With $\varphi_0\in X$ from Lemma \ref{lem:baire-ball}, we set
\begin{equation}\label{eq:g-center}
 g:=e^{i\varphi_0}.
\end{equation}
Then $g\in C^{0,1/3}(\T;\Sone)$ and $\degmap g=0$.


\subsection{Translation averaging}

For each $j$, we define a sequence $s_j:\Z\to\R$ by
\begin{equation}\label{eq:sj-definition}
 s_j(n):=\frac12\sum_{m\in\Z} |\wh g(m)|^2
 \big(q_j(m+n)-q_j(m-n)\big)
 \qquad\text{for all}\ n\in\Z\,.
\end{equation}
The sum converges absolutely because $q_j$ is bounded and $\sum_{m}|\wh g(m)|^2=1$.  It is immediate that $s_j$ is bounded, real, and odd.

\begin{lemma}[Translation-averaging identity]\label{lem:translation-averaging}
For every $h\in L^2(\T;\Sone)$,
\begin{equation}\label{eq:translation-average}
 Q_{s_j}(h)
 =\frac12\int_\T
 \big(Q_{q_j}(gh_\omega)-Q_{q_j}(g\overline{h_\omega})\big)\,\frac{d\omega}{2\pi} \,,
\end{equation}
where, for $\omega\in\T$,
\[
 h_\omega(x):=h(x+\omega),
 \qquad x\in\T.
\]
\end{lemma}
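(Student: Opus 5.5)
The plan is to compute both sides of \eqref{eq:translation-average} directly in terms of Fourier coefficients and compare them. The key input is that translation acts on Fourier coefficients by a unimodular phase, $\wh{h_\omega}(k)=e^{ik\omega}\wh h(k)$. For the first term, $\wh{gh_\omega}(n)=\sum_{k}\wh g(n-k)\wh h(k)e^{ik\omega}$. The sum converges absolutely, since $\wh g,\wh h\in\ell^2$ and we view it as a convolution evaluated pointwise. By Cauchy--Schwarz this is in fact a bounded quantity, because $\sum_k|\wh g(n-k)||\wh h(k)|\le 1$.

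Expanding $|\wh{gh_\omega}(n)|^2$ gives the double sum
\[
\sum_{k,k'}\wh g(n-k)\overline{\wh g(n-k')}\,\wh h(k)\overline{\wh h(k')}\,e^{i(k-k')\omega} \,.
\]
Averaging in $\omega$ kills the off-diagonal terms by Parseval in $\omega$, which is legitimate because the function $\omega\mapsto\wh{gh_\omega}(n)$ has absolutely summable Fourier series in $\omega$. This yields
\[
\int_\T|\wh{gh_\omega}(n)|^2\,\frac{d\omega}{2\pi}=\sum_k|\wh g(n-k)|^2|\wh h(k)|^2 \,.
\]
Multiplying by $q_j(n)$ and summing over $n$ is justified by Tonelli/Fubini: $q_j$ is bounded and
\[
\sum_{n,k}|\wh g(n-k)|^2|\wh h(k)|^2=\|g\|_2^2\|h\|_2^2=1 \,.
\]
After substituting $m=n-k$ we get
\[
\int_\T Q_{q_j}(gh_\omega)\,\frac{d\omega}{2\pi}=\sum_{m,k}|\wh g(m)|^2|\wh h(k)|^2q_j(m+k) \,.
\]

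For the second term, $\wh{\overline{h_\omega}}(k)=\overline{\wh h(-k)}e^{-ik\omega}$, so the same computation gives
\[
\sum_{m,k}|\wh g(m)|^2|\wh h(-k)|^2q_j(m+k)=\sum_{m,k}|\wh g(m)|^2|\wh h(k)|^2q_j(m-k) \,.
\]
Subtracting the two and halving, then exchanging the order of summation (again absolutely convergent), gives
\[
\sum_k|\wh h(k)|^2\,\frac12\sum_m|\wh g(m)|^2\big(q_j(m+k)-q_j(m-k)\big)=\sum_k s_j(k)|\wh h(k)|^2=Q_{s_j}(h) \,,
\]
which is \eqref{eq:translation-average}.

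The only delicate point is interchanging the $\omega$-integral with the $n$-sum and expanding the square. A clean way to organize this is to note that $\int_\T|\wh{gh_\omega}(n)|^2\,d\omega/2\pi$ equals the squared $L^2(d\omega)$-norm of a function of $\omega$ with Fourier coefficients $\wh g(n-k)\wh h(k)$. Plancherel in $\omega$ then applies directly, since $(\wh g(n-k)\wh h(k))_k\in\ell^2$, with no need for absolute summability. After that, all remaining sums have nonnegative weights bounded by $\|q_j\|_\infty$ times a total mass of $1$, so Fubini applies. Measurability and continuity in $\omega$ of $Q_{q_j}(gh_\omega)$ follow from the Lipschitz bound on $Q_{q_j}$ in $L^2$ (as in the proof of Lemma~\ref{lem:baire-ball}) together with continuity of translation in $L^2$.
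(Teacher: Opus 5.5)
Your proof is correct and follows the paper's argument. You use the convolution formula for $\wh{gh_\omega}(k)$, then Parseval in $\omega$ to get $\sum_n|\wh g(k-n)|^2|\wh h(n)|^2$, then Fubini with the bound $\|q_j\|_\infty\|g\|_2^2\|h\|_2^2$ and the change of variables $m=k-n$, and finally the same computation for $g\overline{h_\omega}$. The differences are cosmetic: you obtain the convolution identity pointwise in $\omega$ via absolute convergence rather than by approximating with trigonometric polynomials, and the phase in $\wh{\overline{h_\omega}}(k)$ should be $e^{ik\omega}$, not $e^{-ik\omega}$, which is harmless since only distinctness of the unimodular phases is used.
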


\begin{proof}
The map $\omega\mapsto h_\omega$ is strongly continuous in $L^2(\T)$.
Since multiplication by $g$ is an isometry on $L^2(\T)$ and
$Q_{q_j}$ is continuous on $L^2(\T)$, the integrands in
\eqref{eq:translation-average} are measurable.

For every $k\in\Z$,
\[
 \wh{(gh_\omega)}(k)
 =\sum_{n\in\Z}\wh g(k-n)\wh h(n)e^{in\omega},
\]
where the identity first holds for trigonometric polynomials. If $h_N\to h$
in $L^2(\T)$ are trigonometric polynomials, then the corresponding
expressions converge in $L^2(\T_\omega)$; hence the identity follows in
$L^2(\T_\omega)$ by approximation. Parseval in the $\omega$ variable gives
\begin{equation}\label{eq:averaged-energy-product}
 \int_\T | \wh {gh_\omega}(k)|^2 \,\frac{d\omega}{2\pi}
 =\sum_{n\in\Z} |\wh g(k-n)|^2 |\wh h(n)|^2 \,.
\end{equation}
Multiplying by $q_j(k)$ and summing in $k$ is justified by absolute
convergence, since
\[
 \sum_{k,n\in\Z}|q_j(k)|
 |\wh g(k-n)|^2|\wh h(n)|^2
 \leq\|q_j\|_\infty\|g\|_2^2\|h\|_2^2.
\]
Therefore, Fubini's theorem and the change of variables $m=k-n$ give
\begin{align*}
 \int_\T Q_{q_j}(gh_\omega)\,\frac{d\omega}{2\pi}
 &=\sum_{m,n\in\Z}q_j(m+n) |\wh g(m)|^2 |\wh h(n)|^2 \,.
\end{align*}
Arguing similarly and using $|\wh{\overline h}(k)| = |\wh h(-k)|$, we obtain
\[
 \int_\T Q_{q_j}(g\overline{h_\omega})\,\frac{d\omega}{2\pi}
 =\sum_{m,n\in\Z}q_j(m-n)|\wh g(m)|^2 |\wh h(n)|^2 \,.
\]
Subtracting and using \eqref{eq:sj-definition} proves \eqref{eq:translation-average}.
\end{proof}

\begin{proposition}\label{prop:s-properties}
Let $r$ and $M$ be as in Lemma \ref{lem:baire-ball}. The sequences $s_j$ have the following properties.
\begin{enumerate}[label=\textup{(\roman*)}]
\item If $h=e^{i\eta}$ with $\eta\in X$ and $\|\eta\|_{C^{0,1/3}}<r$, then
\begin{equation}\label{eq:s-uniform-bound}
 |Q_{s_j}(h)|\leq M
 \qquad\text{for every }j \,.
\end{equation}
\item For every fixed such $h$,
\begin{equation}\label{eq:s-pointwise-zero}
 \lim_{j\to\infty} Q_{s_j}(h)=0 \,.
\end{equation}
\item For every fixed $n\in\Z$,
\begin{equation}\label{eq:s-fixed-mode}
 \lim_{j\to\infty} s_j(n) = n \,.
\end{equation}
\end{enumerate}
\end{proposition}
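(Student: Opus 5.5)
All three properties will be derived from the translation-averaging identity of Lemma~\ref{lem:translation-averaging}, together with the Baire bound of Lemma~\ref{lem:baire-ball} and the pointwise convergences \eqref{eq:q-recovers-degree} and \eqref{eq:q-pointwise}.

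\emph{Part (i).} Fix $h=e^{i\eta}$ with $\|\eta\|_{C^{0,1/3}}<r$. For every $\omega\in\T$ we have
\[
 gh_\omega=e^{i(\varphi_0+\eta_\omega)},
 \qquad
 g\overline{h_\omega}=e^{i(\varphi_0-\eta_\omega)},
 \qquad
 \eta_\omega:=\eta(\cdot+\omega).
\]
The $C^{0,1/3}$ norm is translation invariant and invariant under $\eta\mapsto-\eta$, so $\|\pm\eta_\omega\|_{C^{0,1/3}}<r$. Lemma~\ref{lem:baire-ball} therefore bounds both integrands in \eqref{eq:translation-average} by $M$. Averaging over $\omega$ and using the factor $\tfrac12$ gives $|Q_{s_j}(h)|\leq M$.

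\emph{Part (ii).} For each $\omega$, the maps $gh_\omega$ and $g\overline{h_\omega}$ lie in $C^{0,1/3}(\T;\Sone)$ and have degree $0$, because $\varphi_0\pm\eta_\omega$ are periodic real lifts. By \eqref{eq:q-recovers-degree}, or equivalently \eqref{eq:F-pointwise-zero}, the integrand in \eqref{eq:translation-average} tends to $0$ for each $\omega$. The uniform bound from part (i) is at most $2M$, so dominated convergence on the finite measure space $\T$ gives $Q_{s_j}(h)\to0$. Measurability of the integrand in $\omega$ is already established in Lemma~\ref{lem:translation-averaging}.

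\emph{Part (iii).} Fix $n$. By \eqref{eq:q-pointwise}, each term of \eqref{eq:sj-definition} converges, since
\[
 \tfrac12\big(q_j(m+n)-q_j(m-n)\big)\to\tfrac12\big((m+n)-(m-n)\big)=n .
\]
Moreover $\sum_m|\wh g(m)|^2=1$. This step is the main obstacle: we need a bound on $q_j$ that is uniform in $j$ in order to apply dominated convergence over $m$, whereas \eqref{eq:process-bounded1} gives boundedness only for each fixed $j$.

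My first attempt is a Banach--Steinhaus argument. Test \eqref{eq:q-recovers-degree} on maps such as $e^{ikx}$, or more generally on $g$-type maps, to obtain uniform control. The more promising route is direct. For fixed $n$, the difference $s_j(n)-n$ depends only on $q_j$ through the $L^2$-continuous functional
\[
 h\mapsto Q_{q_j}(gh)-Q_{q_j}(g\overline h).
\]
One can therefore try to realize $s_j(n)$ as an average of $Q_{q_j}$ over finitely many explicit maps $g e^{\pm i(\cdot)}$, for instance using $h=e^{inx}$. Indeed, with $h=e^{inx}$ formula \eqref{eq:translation-average} gives
\[
 s_j(n)=Q_{s_j}(e^{inx})=\tfrac12\big(Q_{q_j}(ge^{inx})-Q_{q_j}(ge^{-inx})\big).
\]
The maps $ge^{\pm inx}$ lie in $C^{0,1/3}(\T;\Sone)$ and have degrees $\pm n$, because $\degmap g=0$. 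Hence \eqref{eq:q-recovers-degree} yields
\[
 s_j(n)\to\tfrac12\big(n-(-n)\big)=n .
\]
This argument needs no uniform bound at all, so I will use it for part (iii).
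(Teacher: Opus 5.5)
Your proposal is correct and follows the paper's proof essentially step for step: parts (i) and (ii) use the Baire ball for $\varphi_0\pm\eta(\cdot+\omega)$ and dominated convergence with majorant $2M$, exactly as the paper does. For (iii) you reach the same identity $s_j(n)=\tfrac12\big(Q_{q_j}(ge^{inx})-Q_{q_j}(ge^{-inx})\big)$ that the paper obtains by direct Fourier translation, and then apply degree recovery to $ge^{\pm inx}$, so no uniform bound on $q_j$ is needed. (Your route via the averaging lemma works because the integrand is independent of $\omega$, since $Q_{q_j}$ is unchanged by unimodular constant factors.)
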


\begin{proof}
Translations preserve the $C^{0,1/3}$ norm.  Hence for any $\omega\in\T$, the functions
\[
 \varphi_0+\eta(\cdot+\omega)
 \quad\text{and}\quad
 \varphi_0-\eta(\cdot+\omega)
\]
belong to the Baire ball in Lemma \ref{lem:baire-ball}.  The two terms on the right side of \eqref{eq:translation-average} are therefore bounded in absolute value by $M$, proving \eqref{eq:s-uniform-bound}.

For each fixed $\omega$, the maps $gh_\omega$ and $g\overline{h_\omega}$ have degree zero because all three factors admit periodic real lifts.  Thus \eqref{eq:q-recovers-degree} makes each term in the integrand of \eqref{eq:translation-average} converge to zero.  The bound from Lemma \ref{lem:baire-ball} supplies the integrable majorant $2M$, so dominated convergence proves \eqref{eq:s-pointwise-zero}.

Finally, Fourier translation in frequency gives
\begin{align*}
 Q_{q_j}(e^{inx}g)&=\sum_mq_j(m+n) \, |\wh g(m)|^2 \,,\\
 Q_{q_j}(e^{-inx}g)&=\sum_mq_j(m-n) \, |\wh g(m)|^2 \,.
\end{align*}
Consequently,
\begin{equation}\label{eq:s-degree-difference}
 s_j(n)=\frac12\big(Q_{q_j}(e^{inx}g)-Q_{q_j}(e^{-inx}g)\big).
\end{equation}
The two maps on the right have degrees $n$ and $-n$.  Applying \eqref{eq:q-recovers-degree} proves \eqref{eq:s-fixed-mode}.
\end{proof}


\subsection{Proof of Theorem~\ref{thm:main}}

Let us summarize what we have shown so far in this section: Under our contradiction assumption, we have produced constants $r>0$ and $M>0$, and bounded real odd sequences $s_j$ satisfying
\begin{align}
 |Q_{s_j}(e^{i\eta})|&\leq M
 &&\text{if }\|\eta\|_{C^{0,1/3}}<r,\label{eq:final-uniform}\\
 Q_{s_j}(e^{i\eta})&\longrightarrow0
 &&\text{for every fixed such }\eta,\label{eq:final-pointwise}\\
 s_j(1)&\longrightarrow1 \,.\label{eq:final-mode}
\end{align}
We apply Proposition~\ref{prop:critical-barycenter} with this $r$, and then  Corollary~\ref{cor:barycenter-multiplier} gives
\begin{equation}\label{eq:final-identity}
 \sum_{\ell\geq1}\Lambda_\ell
 \int_\T Q_{s_j}(h_{\ell,\tau})\,\frac{d\tau}{2\pi}
 =\kappa s_j(1)
\end{equation}
for some $\kappa\neq 0$. For every fixed $(\ell,\tau)$, the integrand on the left tends to zero by \eqref{eq:final-pointwise}. Moreover,
\[
 \big|\Lambda_\ell Q_{s_j}(h_{\ell,\tau})\big|
 \leq M|\Lambda_\ell|,
 \qquad
 \sum_{\ell\geq1}|\Lambda_\ell|<\infty.
\]
Dominated convergence on $\N\times\T$, equipped with counting measure times normalized Haar measure, therefore shows that the left side of \eqref{eq:final-identity} tends to zero. By \eqref{eq:final-mode}, the right side tends to $\kappa\ne0$, a contradiction.
\qed


\section*{Acknowledgments}
R.~L.~F. acknowledges partial support from US NSF grant DMS-1954995 and the DFG grants EXC-2111-390814868 and TRR 352-Project-ID 470903074.  P.~I. acknowledges partial support from the US NSF CAREER grant DMS-2152401, US NSF grant DMS-2554183, a Simons Fellowship, and a Humboldt Research Fellowship for Experienced Researchers.  The authors acknowledge the use of AI tools during the exploratory stage of this project.  All mathematical arguments and proofs in the final manuscript were checked and written by the authors.

\providecommand{\bysame}{\leavevmode\hbox to3em{\hrulefill}\thinspace}
\providecommand{\MR}{\relax\ifhmode\unskip\space\fi MR }
\providecommand{\MRhref}[2]{%
  \href{http://www.ams.org/mathscinet-getitem?mr=#1}{#2}
}
\providecommand{\href}[2]{#2}

\end{document}